\documentclass[a4paper,12pt]{article}

\usepackage{amssymb}
\usepackage{amsmath}
\usepackage{amsfonts}
\usepackage[pdftex,pdfstartview=FitH]{hyperref}
\usepackage{graphicx}
\usepackage{color}
\usepackage{eurosym}
\usepackage[utf8]{inputenc}
\usepackage[english]{babel}
\usepackage[mathscr]{eucal}
\usepackage{latexsym, amsmath, amsfonts, amscd, amssymb, verbatim,color,cite}
\usepackage[pdftex,pdfstartview=FitH]{hyperref}
\hypersetup{colorlinks=true,linkcolor=blue,citecolor=blue,urlcolor=blue}

\def\tto{\;{\lower 1pt \hbox{$\rightarrow$}}\kern -10pt
\hbox{\raise 2pt \hbox{$\rightarrow$}}\;}

\def\R{\mathbb R}

\setcounter{MaxMatrixCols}{10}

\hypersetup{colorlinks=false,linkbordercolor={1 1 1},citebordercolor={1 1 1}}
\newtheorem{theorem}{Theorem}[section]

\newtheorem{corollary}[theorem]{Corollary}

\newtheorem{definition}[theorem]{Definition}
\newtheorem{example}[theorem]{Example}

\newtheorem{lemma}[theorem]{Lemma}

\newtheorem{proposition}[theorem]{Proposition}
\newtheorem{remark}[theorem]{Remark}

\newenvironment{proof}[1][Proof]{\noindent\textbf{#1.} }{\hfill$\square$}
\oddsidemargin -0.6cm
\textwidth  17.6 cm
\topmargin  -1.0cm
\headheight -0.3cm
\textheight 25.0 cm
\parindent  4mm
\parskip    10pt
\tolerance  3000



\title{ \sf \Large A  Bounded Degree Lasserre Hierarchy  with SOCP Relaxations for  Global Polynomial Optimization and  Applications\thanks{Research was  supported by a research grant from Australian Research Council.}\\
\medskip
}
\author{T.D. Chuong\thanks{%
Department of Applied Mathematics, University of New South Wales, Sydney
2052, Australia and Saigon University, Vietnam. Email: chuongthaidoan@yahoo.com}, \ V. Jeyakumar\thanks{%
Department of Applied Mathematics, University of New South Wales, Sydney
2052, Australia. Email: v.jeyakumar@unsw.edu.au } \ and G. Li \thanks{%
Department of Applied Mathematics, University of New South Wales, Sydney
2052, Australia. Email: g.li@unsw.edu.au. }}

\begin{document}

\maketitle

\vspace{-1.2cm}
\begin{abstract}
In this paper, we propose a new convergent conic programming hierarchy of relaxations involving both semi-definite cone and second-order cone constraints for solving nonconvex polynomial optimization problems to global optimality. The significance of this hierarchy is that
 the size and number of the semi-definite and second-order cone constraints of the relaxations are fixed and independent of the step or level of the approximation in the hierarchy. Using the Krivine-Stengle's certificate of positivity in real algebraic geometry, we establish the convergence of the hierarchy of relaxations, extending the very recent so-called bounded degree Lasserre hierarchy \cite{Bounded_SOS}. In particular, we also provide a convergent bounded degree second-order cone programming (SOCP) hierarchy for solving polynomial optimization problems.  We then present finite convergence at step one of the SOCP hierarchy for two classes of polynomial optimization problems: a subclass of convex polynomial optimization problems where the objective and constraint functions are SOCP-convex polynomials, defined in terms of specially structured sum of squares polynomials, and a class of polynomial optimization problems, involving polynomials with essentially non-positive coefficients. In the case of one-step convergence for problems with SOCP-convex polynomials, we show how a global solution is recovered from the relaxation via Jensen's inequality of  SOCP-convex polynomials. As an application, we derive a corresponding convergent conic linear programming hierarchy for conic-convex semi-algebraic programs. Whenever the semi-algebraic set of the conic-convex program is described  by convex polynomial inequalities, we show further that the values of the relaxation problems converge to the common value of the convex program and its Lagrangian dual under a constraint qualification.
 \medskip

 \noindent\textbf{Key words}: Polynomial optimization, conic programming relaxations, global optimization, cone-convex polynomial programs.

\end{abstract}

\section{Introduction}

The sums-of-squares (SOS) based hierarchy of semi-definite programming relaxations, often  {referred} to as Lasserre hierarchy, has proved to be a powerful approach for examining and solving polynomial optimization problems to global optimality using semi-definite programming. The convergence of the
Lasserre hierarchy relies on a fundamental sum of squares representation of
positivity of a polynomial over semialgebraic sets established by Putinar \cite{Lassere_book}. Unfortunately, this scheme applies to solve polynomial problems of modest size only because as the dimension of the problem  or the level of the approximation increases, the corresponding computational cost for solving the associated semi-definite programming problems in the hierarchy grows significantly.

Nowadays, a great deal of research in the area of polynomial optimization has been devoted to reducing the computational burden associated with solving the relaxation problems of the hierarchy. This is often done using one or more of the following approaches, where the scheme: (i) exploits special structures to restrict sum-of-squares in the hierarchy, such as sparsity, symmetry, of the underlying polynomial optimization problem to improve performance \cite{sparse-lasserre,JKLL,Waki}.  The  Lasserre hierarchy has recently been shown to solve industrial-scale optimal power flow problems in electrical engineering with several thousands of variables and constraints \cite{large-scale} using restricted sum-of squares in the hierarchy; or  (ii) employs alternative conic programming hierarchies with less computational cost such as the scaled diagonally dominant sum of squares (SDSOS) hierarchies \cite{A,preprint,sdp-socp} or (iii) restricts the degree of the sum-of-squares of the hierarchy by using a different representations of positivity to the Putinar representation, such as the Krivine-Stengle's certificate of positivity in real algebraic geometry \cite{Bounded_SOS,sparse-lasserre}. This approach proposed a bounded degree hierarchy of semi-definite programming (SDP) relaxations where
the size of the semidefinite matrix involved in the hierarchy, in contrast to the standard Lasserre hierarchy, is fixed.

The present work was motivated by the desire to develop a convergent bounded degree hierarchy involving both SDP and SOCP relaxations using approaches (ii) and (iii) above concurrently and to present explicit classes of problems for which finite convergence is achieved and global solutions are found by solving a single relaxation problem.

%

Our contributions to global polynomial optimization are itemized below.
\begin{description}
 \item[(i)]  We propose a bounded degree hierarchy of conic programming relaxations involving both semi-definite and
 second-order cone constraints for solving nonconvex polynomial optimization problems to global optimality, where
 the size and number of the semi-definite and second-order cone constraints of the relaxations are fixed and independent of the step or level of the approximation in the hierarchy. Using the Krivine-Stengle's certificate of positivity in real algebraic geometry \cite{Lassere_book}, we establish convergence of the hierarchy of relaxations, extending the very recent so-called bounded degree Lasserre hierarchy \cite{Bounded_SOS}. In particular, we show that the values of both primal and dual relaxations converge to the value of the original polynomial optimization problem and that the Lagrangian duality also holds between the primal and dual relaxation problems under a constraint qualification.

 \item[(ii)]  As an important special case, we obtain a convergent bounded degree second-order cone programming (SOCP) hierarchy for solving polynomial optimization problems. We  then present finite convergence at step one of the SOCP hierarchy for two classes of polynomial optimization problems: a subclass of convex polynomial optimization problems where the objective and constraint functions are SOCP-convex polynomials, defined in terms of SDSOS polynomials  {\cite{A,preprint}}, and a class of polynomial optimization problems, involving polynomials with essentially non-positive coefficients. We do this by establishing new representation results for nonnegative  polynomials in terms of SDSOS polynomials.

\item[(iii)]  By obtaining a version of Jensen's inequality for SOCP-convex polynomials, we show how global solution is recovered from a relaxation problem of an SOCP-convex polynomial programs whenever it has a finite convergence at one-step of the SOCP hierarchy.   A key feature of  SOCP-convexity is that checking whether a given polynomial is SOCP-convex or not can be done by solving a second-order cone programming problem.

 \item[(iv)] As an application, we present a convergent conic linear programming hierarchy for  a broad class of conic-convex semi-algebraic programs of the form
 \begin{align}
  \inf_{x\in \R^{n}}{\{f(x) \mid  x\in K, \; G(x)\in S\}}, \tag{CP}
\end{align}
  \vspace{-0.6cm}

where $f$ is a convex polynomial on $\mathbb{R}^n$, $S$ is a closed convex cone of $\mathbb{R}^p$, $K$ is a basic semi-algebraic set given by $K:=\{x\in\R^n\mid  g_i(x)\ge 0,\; i=1,\ldots, m\}$ for some (not necessarily convex) polynomials $g_i, i=1,\ldots, m,$ on $\mathbb{R}^n$   and  $G:\R^n\to \mathbb{R}^p$ is an $S$-concave polynomial. We achieve this with the help of both the hyperplane separation of convex geometry \cite{Mor-Nam-14} and Krivine-Stengle's certificate of positivity of algebraic geometry. In the case where the convex semi-algebraic set $K$ is described by convex polynomials $g_i$'s we show further that the values of the relaxation problems of $(CP)$ converge to the common value of $(CP)$ and its Lagrangian dual problem whenever a suitable constraint qualification holds. Related duality results for nonconvex polynomial programs with polynomial multipliers may be found in \cite{jeya-chuong-18}.
\end{description}

The organization of this paper is as follows. In Section 2, we first collect basic notions and preliminaries on polynomials and the scaled diagonally dominant sums-of-squares polynomials.
In Section 3, we introduce the bounded degree hierarchy and establish its asymptotic convergence. In Section 4, we establish
new representation results for nonnegative polynomials in terms of scaled diagonally dominant sums-of-squares polynomials, and obtain one-step convergence results for two classes of polynomial optimization problems. In Section 5 we present results that show how global solutions of SOCP-convex polynomial programs can be found from SOCP relaxations whenever they enjoy one-step convergence. Section 6 {establishes} convergence results for semi-algebraic cone-convex polynomial programs.

\vspace{-0.6cm}
\section{Preliminaries}

First of all, let us recall some  notations and basic facts on  sums-of-squares polynomial and semi-definite programming problems.  Recall that $S^{n}$ denotes the space of symmetric $(n\times n)$ matrices
with the
trace inner product and $\succeq $ denotes the L\"{o}wner partial order of $
S^{n}$, that is, for $M,N\in S^{n},$ $M\succeq N$ if and only if
$(M-N)$ is positive semidefinite. The set consisting of all positive semidefinite $(n \times n)$ matrices is denoted by $S^n_+$.

Consider a polynomial $f$ with degree at most $d$ where $d$ is an even number, and let $l:=d/2$. Let $\mathbb{R}_d[x_1,\ldots,x_n]$ (or $\mathbb{R}[x]$) be the space consisting of all real polynomials on $\mathbb{R}^n$ with degree $d$ and let $s(d,n)$ be the dimension of $\mathbb{R}_d[x_1,\ldots,x_n]$. Write the canonical basis of $\mathbb{R}_d[x_1,\ldots,x_n]$ by
$$ x^{(d)}:=(1,x_1,x_2,\ldots,x_n,x_1^2,x_1x_2,\ldots,x_2^2,\ldots,x_n^2,\ldots,x_1^{d},\ldots,x_n^d)^T.$$
For each $1 \le \alpha \le s(d,n)$, we denote $i(\alpha)=(i_1(\alpha),\ldots,i_n(\alpha)) \in (\mathbb{N}_0)^n$ to be the multi-index such that
 $$x^{(d)}_{\alpha}=x^{i(\alpha)}:=x_1^{i_1(\alpha)}\ldots x_n^{i_n(\alpha)},$$ { where $\mathbb{N}_0:= \mathbb{N} \cup \{0\}$.}
Let the monomials $m_\alpha(x)=x^{(d)}_{\alpha}$ be the $\alpha$-th coordinate of $x^{(d)}$, $1 \leq \alpha \leq s(d,n)$. Thus, we can write
\begin{align}\label{bs-wrote} f(x) = \sum\limits_{\alpha=1}^{s(d,n)} f_{\alpha}m_{\alpha}(x)=\sum\limits_{\alpha=1}^{s(d,n)} f_{\alpha}x^{(d)}_{\alpha}.\end{align}   

 We say that a real polynomial $f$ is sums-of-squares (cf. \cite{Lassere_book}) if there exist real polynomials $f_j$, $j=1,\ldots,q$, such that $f=\sum_{j=1}^qf_j^2$. The set consisting of all sum of squares real polynomials (resp. the set consisting of all sum of squares real polynomials with degree at most $d$) is denoted by $\Sigma^2[x]$ or $\Sigma^2_n$  (resp. $\Sigma^2_d[x]$ or $\Sigma^2_{n,d}$).  Let $d$ be an even number and let $l=d/2$. Then, $f$ is a sum-of-squares polynomial with degree $d$ if and only if there exists a positive semi-definite symmetric $(s(l,n) \times s(l,n))$ matrix $Q$  such that
\begin{equation}\label{eq:useful}
f(x)=(x^{(l)})^TQ x^{(l)}.
\end{equation}
 Then, by comparing the coefficients in (\ref{eq:useful}), we have the following linear matrix inequality characterization of a sum-of-squares polynomial.
\begin{lemma}\label{th:sos} Let $d$ be an even number.
For a polynomial $f$ on $\mathbb{R}^n$ with degree at most $d$, $f$ is sum-of-squares  if and only if the following linear matrix inequality problem has a solution
 \begin{eqnarray*}
\left\{ \begin{array}{l}
Q \in S_+^{s(l,n)} \\
 \displaystyle f_{\alpha}=\sum_{1 \le \beta,\gamma \le s(l,n), i(\beta)+i(\gamma)=i(\alpha)} Q_{\beta \gamma}, \ 1  \le \alpha \le s(d,n),\ l=d/2.
        \end{array}\right.
\end{eqnarray*}
\end{lemma}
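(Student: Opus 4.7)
The plan is to read off the lemma as a direct consequence of the Gram-matrix identity (\ref{eq:useful}). By that identity, a polynomial $f \in \mathbb{R}_d[x_1,\ldots,x_n]$ is sum-of-squares if and only if there exists $Q \in S_+^{s(l,n)}$ with $l = d/2$ such that $f(x) = (x^{(l)})^T Q\, x^{(l)}$ holds as a polynomial identity in $x$. (The nontrivial half of (\ref{eq:useful}) comes from diagonalizing $Q = \sum_j \lambda_j v_j v_j^T$ with $\lambda_j \ge 0$, giving the explicit SOS representation $f(x) = \sum_j \bigl(\sqrt{\lambda_j}\, v_j^T x^{(l)}\bigr)^2$.) So the task reduces to expressing this polynomial identity componentwise in the monomial basis.

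First I would expand the bilinear form:
\begin{equation*}
(x^{(l)})^T Q\, x^{(l)} \;=\; \sum_{\beta=1}^{s(l,n)} \sum_{\gamma=1}^{s(l,n)} Q_{\beta\gamma}\, m_\beta(x) m_\gamma(x) \;=\; \sum_{\beta,\gamma} Q_{\beta\gamma}\, x^{i(\beta)+i(\gamma)}.
\end{equation*}
Since $\beta$ and $\gamma$ index monomials of degree at most $l$, each multi-index $i(\beta) + i(\gamma)$ has total degree at most $d$ and hence equals $i(\alpha)$ for a unique $\alpha$ with $1 \le \alpha \le s(d,n)$. Grouping by this $\alpha$ yields
\begin{equation*}
(x^{(l)})^T Q\, x^{(l)} \;=\; \sum_{\alpha=1}^{s(d,n)} \Biggl(\,\sum_{\substack{1 \le \beta,\gamma \le s(l,n) \\ i(\beta)+i(\gamma)=i(\alpha)}} Q_{\beta\gamma}\Biggr)\, x^{i(\alpha)}.
\end{equation*}

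Comparing this with the monomial expansion $f(x) = \sum_{\alpha=1}^{s(d,n)} f_\alpha\, x^{i(\alpha)}$ from (\ref{bs-wrote}), and using that the monomials $\{x^{i(\alpha)}\}_{\alpha=1}^{s(d,n)}$ form a linearly independent set in $\mathbb{R}_d[x_1,\ldots,x_n]$, I can match coefficients to obtain exactly the linear equations
\begin{equation*}
f_\alpha \;=\; \sum_{\substack{1 \le \beta,\gamma \le s(l,n) \\ i(\beta)+i(\gamma)=i(\alpha)}} Q_{\beta\gamma}, \qquad 1 \le \alpha \le s(d,n).
\end{equation*}
Therefore the polynomial identity $f(x) = (x^{(l)})^T Q\, x^{(l)}$ is equivalent to this linear system in the entries of $Q$, and coupling with $Q \succeq 0$ gives the stated LMI characterization. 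The argument is essentially combinatorial bookkeeping; the only point requiring care is the multi-index matching $i(\beta) + i(\gamma) = i(\alpha)$ and the observation that this sum has degree at most $d$, so no monomial outside the basis $x^{(d)}$ is produced.
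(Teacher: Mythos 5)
Your proposal is correct and follows exactly the route the paper intends: the paper's entire justification for Lemma~\ref{th:sos} is the one-line remark that it follows ``by comparing the coefficients in (\ref{eq:useful}),'' and your write-up simply carries out that coefficient comparison in detail (expanding the Gram form, grouping by the multi-index $i(\beta)+i(\gamma)=i(\alpha)$, and invoking linear independence of the monomials). No gaps; your added justification of the nontrivial direction of (\ref{eq:useful}) via the spectral decomposition of $Q$ is the standard argument and is consistent with the paper's cited source.
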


\begin{definition}{\bf (Scaled diagonally dominant sums-of-squares polynomials \cite{A})}
We say a polynomial $f$ with degree $d=2l$ is scaled diagonally dominant sums-of-squares (SDSOS) if there exist $p \in \mathbb{N}$ with $1 \le p \le s(l,n)$ and nonnegative number $\alpha_i$, $\beta_{ij}^+,\beta_{ij}^-$ and $\gamma_{ij}^+,\gamma^-_{ij}$ such that
\[
 f(x)=\sum_{i=1}^{p} \alpha_i m_i^2(x)+\sum_{i,j=1, i \neq j}^{p} (\beta_{ij}^+ m_i(x)+\gamma_{ij}^+ m_j(x))^2+\sum_{i,j=1,  i \neq j}^{p} (\beta_{ij}^- m_i(x)-\gamma_{ij}^- m_j(x))^2;
 \]
 where $m_i$ and $m_j$ are monomials in the variable $x$. We denote the set ${\bf SDSOS}$ or  ${\bf SDSOS}_n$  (resp.  ${\bf SDSOS}_{n,d}$) as the set consisting of all scaled diagonally dominant sums-of-squares polynomials  (resp. the set consisting of all scaled diagonally dominant sums-of-squares polynomials 	with degree at most $d$). Meanwhile,  the set ${\bf SDSOS}_m$ (resp.  ${\bf SDSOS}_{m,d}$) stands for the set consisting of all scaled diagonally dominant sums-of-squares polynomials  (resp. the set consisting of all scaled diagonally dominant sums-of-squares polynomials 	with degree at most $d$ with respect to $(x_1,\ldots, x_m),$ where $1\le m\le n.$
 \end{definition}
 It is clear that any scaled diagonally dominant sums-of-squares polynomial is sums-of-squares. On the other hand, the converse is not true in general.
 Moreover,  we note that, in the algebraic geometry literature, this subclass of sums-of-squares polynomials is also called sums-of-binomial-squares polynomials \cite{Fiaco,Marshall}.

 An important and useful feature for a scaled diagonally dominant sums-of-squares polynomial is that checking a given real polynomial is scaled diagonally dominant sums-of-squares can be equivalently
 reformulated as a feasibility problem of a second-order cone programming. To see this, we need the notion of generalized diagonally dominant matrix.  
Recall that an $(n \times n)$ matrix $Q$ is a diagonally dominant matrix  \cite{Matrix} if $$Q_{ii} \ge \sum_{j \neq i} |Q_{ij}| \mbox{ for each }  i=1,\ldots,n.$$
More generally,
an $(n \times n)$ matrix $Q$ is called a generalized diagonally dominant matrix  if there exists a diagonal matrix $D$ with all positive diagonal elements
such that $D^TQD$ is a diagonally dominant  matrix.


Let $n_1, n_2\in\mathbb{N}$,  $0\le n_1, n_2\le n, n=n_1+n_2$ and denote \begin{align}\label{U-n1-n1}U(n_1, n_2):=\{\sigma\in\mathbb{R}[x]\mid \sigma=\sigma_1+\sigma_2, \sigma_1\in \Sigma^2_{n_1}, \sigma_2\in {\bf SDSOS}_{n_2}\},\\U_d(n_1, n_2):=\{\sigma\in\mathbb{R}[x]\mid \sigma=\sigma_1+\sigma_2, \sigma_1\in \Sigma^2_{n_1,d}, \sigma_2\in {\bf SDSOS}_{n_2,d}\}\notag\end{align} where $d\in\mathbb{N}, \Sigma^2_{n_1}=\Sigma^2_{n_1,d}:=\{0\}$ for $n_1=0$ and ${\bf SDSOS}_{n_2}={\bf SDSOS}_{n_2,d}:=\{0\}$ for $n_2=0.$

We say that a polynomial $f$ on $\mathbb{R}^n$ (resp., with degree $d$)  is $(n_1,n_2)$-semidefinite and scaled diagonally dominant sums-of-squares ($(n_1, n_2)$-SDP-SDSOS) if $f\in U(n_1, n_2)$ (resp., $f\in U_d(n_1, n_2).$ The following property provides characterizations of SDP-SDSOS polynomial in terms of mixed semidefinite and generalized diagonally dominant matrix and an associated semidefinite-second-order cone program.

To do this, we further define two notation of multi-index sets. Let $d,n$ be any natural number. Recall that for each $1 \le \alpha \le s(d,n)$, we denote $i(\alpha)=(i_1(\alpha),\ldots,i_n(\alpha)) \in {(\mathbb{N}_0)^n}$ to be the multi-index such that
 $$x^{(d)}_{\alpha}=x^{i(\alpha)}:=x_1^{i_1(\alpha)}\ldots x_n^{i_n(\alpha)}.$$
 Let $n_1, n_2\in\mathbb{N}$, $0\le n_1, n_2\le n, n=n_1+n_2$. For $1\le \alpha \le s(d,n_1)$, we define $$i_+(\alpha)=\{v \in {(\mathbb{N}_0)^n}:
  v=(i(\alpha),\underbrace{0,\ldots,0}_{n_2})\}$$
 and, for $1\le \alpha \le s(d,n_2)$, we define $$i^+(\alpha)=\{v \in {(\mathbb{N}_0)^n}:  v=(\underbrace{0,\ldots,0}_{n_1}, i(\alpha))\}.$$
In what follows, we also 
denote $X_1:=(x_1,\ldots,x_{n_1})$ and $X_2:=(x_{n_1+1},\ldots,x_n)$.

\begin{proposition}{\bf (SDP-SOCP reformulation of SDP-SDSOS polynomials)} \label{prop:1-SDP} Let $n_1, n_2\in\mathbb{N}$, $0\le n_1, n_2\le n, n=n_1+n_2$.
Let $f$ be a real polynomial on $\mathbb{R}^n$ with an even degree $d$, and let $l=d/2$. Then, the following statements are equivalent:
\begin{itemize}
 \item[{\rm (i)}] $f$ is an $(n_1, n_2)$-SDP-SDSOS polynomial.
 \item[{\rm (ii)}] $f(x)=\big(X_1^{(l)}\big)^TQ^1 X_1^{(l)}+\big(X_2^{(l)}\big)^TQ^2 X_2^{(l)}$ for each $x=(X_1,X_2)\in\mathbb{R}^n$, where $Q^1\in S^{n_1}_+$ if $n_1>0$, $Q^1:=0$ if $n_1=0,$ $Q^2$ is an $(s(l,n_2) \times s(l,n_2))$  generalized diagonally dominant matrix if $n_2>0$ and $Q^2:=0$ if $n_2=0$.
 \item[{\rm (iii)}] The following SDP-SOCP  feasibility problem has a solution
 there exist an $(s(l,n_1) \times s(l,n_1))$ symmetric matrix $Q^1$ if $n_1>0$, $Q^1:=0$ if $n_1=0$  and an $(s(l,n_2) \times s(l,n_2))$ symmetric matrix $Q^2$ if $n_2>0$ and $Q^2:=0$ if $n_2=0$ and symmetric matrices $M^{ij}$, $1 \le i, j \le s(l,n_2)$ if $n_2>0$ and $M^{ij}:=0$ if $n_2=0$, such that
 \[
\left\{
\begin{array}{l}
\displaystyle f_{\alpha} = \sum_{\substack{ 1\le \beta \le s(l,n_1), \\ 1\le \gamma \le s(l,n_1) \\
i(\alpha)=i_+(\beta)+i_+(\gamma)}}Q^1_{\beta \gamma}+\sum_{\substack{1\le \beta \le s(l,n_2), \\ 1\le \gamma \le s(l,n_2), \\ i(\alpha)=i^+(\beta)+i^+(\gamma)}}Q^2_{\beta \gamma}, \\
\\Q^1\in S^{s(l,n_1)}_+,\  Q^2=\sum_{1 \le i,j \le s(l,n_2)} M^{ij}, \\
 M^{ij}_{\beta\gamma}=0,\, \forall \ (\beta,\gamma) \notin \{(i,i),(j,j),(i,j),(j,i)\}, \\
 \|\bigg(\begin{array}{c}    2 M^{ij}_{ij}\\
                                                        M^{ij}_{ii}-M^{ij}_{jj}
                                                                                  \end{array}\bigg)\| \le M^{ij}_{ii}+M^{ij}_{jj}, 1 \le i,j \le s(l,n_2).
\end{array}\right.
\]
\end{itemize}
\end{proposition}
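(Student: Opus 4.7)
The plan is to prove the equivalences by treating the two summands $\sigma_1 \in \Sigma^2_{n_1,d}$ and $\sigma_2 \in \mathbf{SDSOS}_{n_2,d}$ separately and then combining, since by the definition of $U_d(n_1,n_2)$ the decomposition $f = \sigma_1 + \sigma_2$ is additive and the two groups of variables $X_1$ and $X_2$ are disjoint.

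For (i)$\Rightarrow$(ii), I would apply Lemma~\ref{th:sos} to $\sigma_1$ (viewed as a polynomial in $X_1$ only) to obtain a positive semidefinite Gram matrix $Q^1 \in S^{s(l,n_1)}_+$ with $\sigma_1(X_1) = (X_1^{(l)})^T Q^1 X_1^{(l)}$, and for $\sigma_2$ I would use the standard Gram-matrix characterization of SDSOS polynomials: a polynomial is SDSOS if and only if it admits a representation $(X_2^{(l)})^T Q^2 X_2^{(l)}$ with $Q^2$ a generalized diagonally dominant matrix. This second fact is the content of the SDSOS Gram-representation theorem (obtained by grouping the single-monomial squares $\alpha_i m_i^2$ into the diagonal of $Q^2$ and each two-monomial binomial square $(\beta m_i \pm \gamma m_j)^2$ into a $2\times 2$ PSD principal submatrix of $Q^2$, the defining pattern of generalized diagonal dominance). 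The reverse direction (ii)$\Rightarrow$(i) is obtained by decomposing the generalized diagonally dominant $Q^2$ as a sum $\sum_{i,j} M^{ij}$ of matrices supported on $2\times 2$ principal submatrices, each PSD, and reading each such block as a binomial square in $m_i, m_j$.

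For (ii)$\Leftrightarrow$(iii), the first equation in (iii) is obtained by expanding $(X_1^{(l)})^T Q^1 X_1^{(l)} + (X_2^{(l)})^T Q^2 X_2^{(l)}$ and matching coefficients of each monomial $x^{i(\alpha)}$ on the two sides of \eqref{bs-wrote}, using the multi-indices $i_+(\beta)$ and $i^+(\beta)$ to encode that $X_1$-monomials only produce multi-indices supported in the first $n_1$ coordinates and $X_2$-monomials only in the last $n_2$ coordinates. The equivalence of $Q^2$ being generalized diagonally dominant with the existence of symmetric matrices $M^{ij}$ satisfying the stated support and second-order cone constraints comes from the classical fact that $Q^2$ is generalized diagonally dominant if and only if $Q^2 = \sum_{i,j} M^{ij}$, where each $M^{ij}$ is supported on the $(i,j)$-principal $2\times 2$ block and is itself PSD. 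A symmetric $2\times 2$ matrix $\begin{pmatrix} a & b \\ b & c \end{pmatrix}$ is PSD iff $a+c \ge 0$ and $(a-c)^2 + (2b)^2 \le (a+c)^2$, which is exactly the rotated-cone constraint
\[
\left\| \begin{pmatrix} 2 M^{ij}_{ij} \\ M^{ij}_{ii} - M^{ij}_{jj} \end{pmatrix} \right\| \le M^{ij}_{ii} + M^{ij}_{jj}.
\]

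The main obstacle I expect is bookkeeping: carefully handling the degenerate cases $n_1 = 0$ or $n_2 = 0$ (where the corresponding Gram matrix and monomial vector collapse), and tracking the multi-index maps $i_+$ and $i^+$ so that the coefficient-matching step is unambiguous. The technical ``content'' — the SDSOS Gram-matrix representation and the $2\times 2$ PSD-to-SOC reformulation — is well-known, so the bulk of the proof is verifying that the separation of variables into $X_1$ and $X_2$ is compatible with the coefficient indexing scheme introduced before the proposition.
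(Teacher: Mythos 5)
Your proposal is correct and follows essentially the same route as the paper: decompose $f=\sigma_1(X_1)+\sigma_2(X_2)$, apply the SOS Gram-matrix characterization \eqref{eq:useful} to $\sigma_1$, use the SDSOS/generalized-diagonally-dominant Gram representation for $\sigma_2$, and match coefficients via the maps $i_+$ and $i^+$. The only difference is that you sketch the SDSOS-to-$2\times 2$-block decomposition and the $2\times2$ PSD-to-second-order-cone equivalence explicitly, whereas the paper simply cites these facts from Ahmadi and Majumdar.
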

\begin{proof}  Since  $f$ is an $(n_1, n_2)$-SDP-SDSOS polynomial, there exist  $f_1\in \Sigma^2_{n_1,d}, f_2\in {\bf SDSOS}_{n_2,d}$ such that    $f(x)=f_1(X_1)+f_2(X_2)$ for all $x=(X_1,X_2)\in\R^n. $ By a characterization of  the sum-of-squares polynomial   in \eqref{eq:useful}, it holds that $f_1(X_1)=\big(X_1^{(l)}\big)^TQ^1 X_1^{(l)}$, where $Q^1\in S^{n_1}_+$ if $n_1>0$, $Q^1:=0$ if $n_1=0.$ Thanks to Theorems~7 and 8 and Lemma~9 in \cite{preprint}, we have $f_2(X_2)=\big(X_2^{(l)}\big)^TQ^2 X_2^{(l)}$, where  $Q^2$ is an $(s(l,n_2) \times s(l,n_2))$  generalized diagonally dominant matrix if $n_2>0$ satisfying  $Q^2=\sum_{1 \le i,j \le s(l,n_2)} M^{ij},
 M^{ij}_{\beta\gamma}=0,\, \forall \ (\beta,\gamma) \notin \{(i,i),(j,j),(i,j),(j,i)\},$
$$ \|\bigg(\begin{array}{c}    2 M^{ij}_{ij}\\
                                                        M^{ij}_{ii}-M^{ij}_{jj}
                                                                                  \end{array}\bigg)\| \le M^{ij}_{ii}+M^{ij}_{jj}, 1 \le i,j \le s(l,n_2)
$$  and $Q^2:=0$ if $n_2=0$.
Note that $f(x)=\big(X_1^{(l)}\big)^TQ^1 X_1^{(l)}+\big(X_2^{(l)}\big)^TQ^2 X_2^{(l)}$. By comparing the coefficients of each monomials, we obtain that
\[f_{\alpha} = \sum_{\substack{ 1\le \beta \le s(l,n_1), \\ 1\le \gamma \le s(l,n_1) \\
i(\alpha)=i_+(\beta)+i_+(\gamma)}}Q^1_{\beta \gamma}+\sum_{\substack{1\le \beta \le s(l,n_2), \\ 1\le \gamma \le s(l,n_2), \\ i(\alpha)=i^+(\beta)+i^+(\gamma)}}Q^2_{\beta \gamma}{,}
\]
Thus, the conclusion follows.
\end{proof}

\section{ Bounded Degree Mixed SDP-SOCP Hierarchies}
Consider the following nonconvex polynomial optimization problem:
\begin{align}\label{P}
 \inf_{x \in \mathbb{R}^n} { \{f(x) \mid   g_i(x) \ge 0, i=1,\ldots,m\}},\tag{P}
\end{align}
where $f$ and $g_i$ are real polynomials with degrees at most $d$. We denote the feasible set of problem \eqref{P} as
$K:=\{x \in \mathbb{R}^n \mid g_i(x) \ge 0, i=1,\ldots,m\}$.

Throughout this section, we need the following assumption.

{\bf Assumption A:} We assume that $K$ is a nonempty compact set and $\{1, g_1,\ldots,g_m\}$ generates $\mathbb{R}[x]$, where $\mathbb{R}[x]$ is the  ring of polynomials in the variable of $x=(x_1,\ldots,x_n)${.}

We note that, Assumption A is automatically satisfied for the box constraints where $K=\{x: 0 \le x_i \le 1, i=1,\ldots,n\}$ because the collection of polynomials $\{1,x_1,\ldots,x_n,1-x_1,\ldots,1-x_n\}$ generates $\mathbb{R}[x]$.
Moreover, as long as the feasible set $K$ is compact, assumption A can also be enforced by adding redundant constraints. Indeed, as $K$ is compact, there exists $M>0$ such that
$x_i \le M$ for all $x \in K$. Then, by adding redundant constraints $x_i \le M, i=1,\ldots,n$ to problem (P), we see that Assumption A holds because
 $\{1, g_1,\ldots,g_m, M-x_1,\ldots,M-x_n\}$ generates $\mathbb{R}[x]$,

For problem (P), let $M$ be a positive number such that $\displaystyle M > \max_{1 \le i\le m}\sup_{x \in K}\{g_i(x)\}$ and denote $\widehat{g}_i(x)=\frac{g_i(x)}{M}$. Let $n_1, n_2\in\mathbb{N}$, $0\le n_1, n_2\le n, n=n_1+n_2$ and fix a positive even number ${r} \in \mathbb{N}$.
 We now define a hierarchy of SDP-SOCP relaxation problem as follows: for all $k \in \mathbb{N}$,
\begin{align}\label{SDP-SOCP} \sup_{\substack{\mu \in \mathbb{R}, c_{\bf p,q} \ge 0, \\
\sigma_1\in \Sigma^2_{n_1,r}, \sigma_2\in {\bf SDSOS}_{n_2,r}}}\left\{\mu\mid f-\sum_{{\bf p,q} \in {(\mathbb{N}_0)^m}, |{\bf p}|+|{\bf q}| \le k}c_{{\bf p, q}}\prod_{i=1}^{m} \widehat{g}_i^{p_i}(1-\widehat{g}_i)^{q_i}-\mu =\sigma_1+\sigma_2\right\}.\tag{\rm RP$_{k}^{r}$} \end{align}

{\bf Bounded Degree Mixed SDP-SOCP reformulation for the relaxation problem:} We now show that the relaxation problem~\eqref{SDP-SOCP} can be reformulated as mixed semidefinite and second-order cone
programming problems where the number and the size of the mixed SDP-SOCP constraints are independent of the level $k$ of the approximation. Define
\[
h_{{\bf p,q}}(x)= \prod_{i=1}^{m} \widehat{g}_i^{p_i}(1-\widehat{g}_i)^{q_i}, \ \ {\bf p,q} \in {(\mathbb{N}_0)^m}, |{\bf p}|+|{\bf q}| \le k.
\]
Let $\big( f-\sum\limits_{{\bf p,q} \in {(\mathbb{N}_0)^m}, |{\bf p}|+|{\bf q}| \le k}c_{{\bf p,q}} (h_{\bf p,q})\big)(x)=\sum\limits_{\alpha}\big( f-\sum\limits_{{\bf p,q} \in {(\mathbb{N}_0)^m}, |{\bf p}|+|{\bf q}| \le k}c_{{\bf p,q}} (h_{\bf p,q})\big)_{\alpha}x^{i(\alpha)}$ for  each $x\in\mathbb{R}^n$.
Then, \eqref{SDP-SOCP} can be equivalently reformulated as the following mixed semidefinite and second-order cone programming problem:
\begin{eqnarray*}
\sup_{\substack{\mu \in \mathbb{R}, c_{\bf p, q} \ge 0, \\ Q^1 \in S^{s(\frac{r}{2},n_1)},\\ Q^2, M^{ij} \in S^{s(\frac{r}{2},n_2)}}} \{\mu&\mid& \big( f-\sum_{{\bf p,q} \in {(\mathbb{N}_0)^m}, |{\bf p}|+|{\bf q}| \le k}c_{{\bf p,q}} (h_{\bf p,q})\big)_{\alpha}-\mu=\sum_{\substack{ 1\le \beta \le s(\frac{r}{2},n_1), \\ 1\le \gamma \le s(\frac{r}{2},n_1) \\
i(\alpha)=i_+(\beta)+i_+(\gamma)}}Q^1_{\beta \gamma}+\sum_{\substack{1\le \beta \le s(\frac{r}{2},n_2), \\ 1\le \gamma \le s(\frac{r}{2},n_2), \\ i(\alpha)=i^+(\beta)+i^+(\gamma)}}Q^2_{\beta \gamma}, \\ 
 & & Q^1\in S^{ s(\frac{r}{2},n_1)}_+, Q^2=\sum_{1 \le i,j \le s(\frac{r}{2},n_2)} M^{ij}, \\
 & & M^{ij}_{\beta\gamma}=0,\, \forall \ (\beta,\gamma) \notin \{(i,i),(j,j),(i,j),(j,i)\}, \\
& &
 \|\bigg(\begin{array}{c}    2 M^{ij}_{ij}\\
                                                        M^{ij}_{ii}-M^{ij}_{jj}
                                                                                  \end{array}\bigg)\| \le M^{ij}_{ii}+M^{ij}_{jj},  1 \le i,j \le s(\frac{r}{2},n_2) \}.
\end{eqnarray*}
An important and useful feature is that, in the bounded degree semidefinite and second-order cone hierarchy, the number and the size of the mixed semidefinite and second-order cone constraints are independent of the level $k$ of the approximation problems \eqref{SDP-SOCP} for each fixed $r$. This is in contrast to the usual sums-of-squares hierarchy where the number/size of the associated
    conic constraints sharply increases as the level of approximation increases.

  We now formulate the Lagrangian dual problem of the above mixed semidefinite and second-order cone programming reformulation of problem \eqref{SDP-SOCP}.
  Recall that for any $u \in \mathbb{N}$ and  given $y=(y_\alpha)\in \mathbb{R}^{s(u,n)},$   $L_y:\mathbb{R}_d[x]\to \mathbb{R}$ is the Riesz functional given by
  \begin{align}\label{Ly} L_y(f)=\sum_{\alpha=1}^{s(u,n)}f_\alpha y_\alpha \mbox{ for } f(x)=\sum_{\alpha=1}^{s(u,n)}f_\alpha x^{(d)}_\alpha, \end{align}
and ${\bf M}_u(y)$ is the moment matrix about $x \in \R^n$ with degree $u$ generated by $y=(y_\alpha)\in \mathbb{R}^{s(2u,n)}$ which is  defined by
\begin{align} \label{M-2}{\bf M}_u(y)=\sum_{\alpha=1}^{ s(2u,n)} y_\alpha M_\alpha,\end{align}
where for each $\alpha=1,\ldots, s(2u,n),$ $M_\alpha$ is the $(s(u,n)\times s(u,n))$ symmetric matrix such that \begin{align}\label{M-1}x^{(u)}(x^{(u)})^T= \sum_{\alpha=1}^{ s(2u,n)} x^{(u)}_\alpha M_\alpha.\end{align}
It is known that (cf. \cite[Lemma 4.2]{Laurent}) if there is a representing measure $\mu$ of $y$ over $K$ up to order $2u$ in the sense that $y_{\alpha}=\int_K x^{i(\alpha)} d\mu$ for all $\alpha=1,\ldots,s(2u,d)$, then ${\bf M}_{u}(y) \succeq 0$.
Let $X \subseteq \{x_1,\ldots,x_n\}$. We also define ${\bf M}_u^{X}(y)$ to be the moment submatrix
obtained from ${\bf M}_u(y)$ by retaining only those rows and columns $\alpha$ such that ${\rm supp}(i(\alpha)) \in X$ where ${\rm supp}(v)$ is the support of a vector $v \in \mathbb{R}^n$  given by ${\rm supp}(v)=\{i: v_i \neq 0\}$
.

Let $D= \max\{kd, r\}$ and denote $X_1:=(x_1,\ldots,x_{n_1})$, $X_2:=(x_{n_1+1},\ldots,x_n)$. Then, the Lagrangian dual problem of the above mixed semidefinite and second-order cone programming reformulation of problem \eqref{SDP-SOCP}  can be written as
  \begin{eqnarray}\label{SDPSOCP*}
(DRP_k^r) & \displaystyle \inf_{\substack{y=(y_\alpha)\in \mathbb{R}^{s(D,n)}}} & L_y(f) \nonumber \\
& & L_y (h_{\bf p,q}) \ge 0, \, {\bf p,q} \in {(\mathbb{N}_0)^m}, |{\bf p}|+|{\bf q}| \le k, \nonumber \\
& & y_1=1,\nonumber \\
 && {\bf M}_{\frac{r}{2}}^{X_1}(y) \  
 \in S_+^{s(\frac{r}{2},n_1)}, \nonumber \\ 
& & \|\bigg(\begin{array}{c}    2({\bf M}_{\frac{r}{2}}^{X_2}(y))_{ij}\\
                                                      ({\bf M}_{\frac{r}{2}}^{X_2}(y))_{ii}-({\bf M}_{\frac{r}{2}}^{X_2}(y))_{jj}
                                                                                  \end{array}\bigg)\| \le ({\bf M}_{\frac{r}{2}}^{X_2}(y))_{ii}+({\bf M}_{\frac{r}{2}}^{X_2}(y))_{jj}, 1 \le i,j \le s(\frac{r}{2},n_2). \nonumber
\end{eqnarray}


We now establish the asymptotic convergence of the bounded degree SDP-SOCP hierarchy. To achieve this,
we will need the following certificate of positivity of a polynomial over a compact semi-algebraic set.
\begin{lemma} {\bf (Krivine-Stengle's certificate of positivity, cf. \cite[Theorem 2.23]{Lassere_book})}
Let $f$ and $g_i$, $i=1,\ldots,m$ be real polynomials and let $K=\{x:g_i(x) \ge 0, i=1,\ldots,m\}$. Suppose that Assumption A holds. Let $M$ be a positive number such that
$\displaystyle M > \max_{1 \le i\le m}\sup_{x \in K}\{g_i(x)\}$ and denote $\widehat{g}_i(x):=g_i(x)/M$. If $f(x)>0$ for all $x \in K$, then,
there exist nonnegative scalars $c_{{\bf \alpha, \beta}}$ such that
\[
f= \sum_{{\bf \alpha,\beta} \in {(\mathbb{N}_0)^m}}c_{{\bf \alpha, \beta}}\prod_{i=1}^{m} \widehat{g}_i^{\alpha_i}(1-\widehat{g}_i)^{\beta_i}.
\]
\end{lemma}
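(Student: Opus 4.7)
The plan is to follow the classical Krivine--Stengle proof via a Hahn--Banach separation argument combined with a moment representation on the compact semi-algebraic set $K$.

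First, I would introduce the convex cone
\[
C \;:=\; \Bigl\{\, \textstyle\sum_{\alpha,\beta \in (\mathbb{N}_0)^m} c_{\alpha,\beta}\prod_{i=1}^m \widehat{g}_i^{\alpha_i}(1-\widehat{g}_i)^{\beta_i} \;:\; c_{\alpha,\beta}\ge 0,\ \text{finite sum}\,\Bigr\}
\]
inside $\mathbb{R}[x]$. By the choice of $M$ we have $0\le \widehat{g}_i\le 1$ on $K$, so every $h\in C$ is nonnegative on $K$; the conclusion of the lemma is therefore equivalent to showing $f\in C$. I would argue by contradiction, assuming $f\notin C$.

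Second, I would produce a separating functional. Equipping $\mathbb{R}[x]$ with the finest locally convex topology (under which $C$ is closed — this is where the compactness of $K$ and the Archimedean flavour of Assumption A enter), Hahn--Banach yields a linear functional $L:\mathbb{R}[x]\to\mathbb{R}$ with $L(f)\le 0$, $L(h)\ge 0$ for all $h\in C$, and (after normalization) $L(1)=1$. In particular, $L$ is nonnegative on every product of the form $\prod_{i=1}^m \widehat{g}_i^{\alpha_i}(1-\widehat{g}_i)^{\beta_i}$.

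Third, I would identify $L$ with integration against a probability measure supported on $K$. The hypothesis that $\{1,g_1,\ldots,g_m\}$ generates $\mathbb{R}[x]$, together with the normalization $0\le \widehat{g}_i\le 1$ on $K$, should let me bound $|L(x^\gamma)|$ uniformly in terms of $\|x^\gamma\|_{C(K)}$ on polynomials of each fixed degree. Consequently $L$ extends continuously to the subalgebra generated by $\widehat{g}_1,\ldots,\widehat{g}_m$, which is dense in $C(K)$ by Stone--Weierstrass (it contains the constants and separates points of $K\subseteq \mathbb{R}^n$ since Assumption A forces the coordinate functions to lie in its closure). The Riesz representation theorem then supplies a positive Borel measure $\mu$ on $K$ with $L(h)=\int_K h\,d\mu$ for every polynomial $h$, and since $\mu\neq 0$ and $f>0$ on $K$,
\[
L(f)\;=\;\int_K f\,d\mu\;>\;0,
\]
contradicting $L(f)\le 0$.

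The main obstacle is the Archimedean/boundedness step: passing from a purely algebraic nonnegativity of $L$ on the generators of $C$ to an honest positive measure on $K$ requires extracting uniform bounds $|L(x^\gamma)|\le K_\gamma$ from Assumption A. Concretely, one must show that each monomial $x^\gamma$ admits a representation of the form $N\pm x^\gamma \in C$ (or a limit thereof) using finitely many products $\prod \widehat{g}_i^{\alpha_i}(1-\widehat{g}_i)^{\beta_i}$, which is exactly the content of the Archimedean closure of the cone generated by $\{\widehat{g}_i, 1-\widehat{g}_i\}$ under the standing assumption. Once this estimate is in place, the Stone--Weierstrass density and Riesz representation steps are routine, and the contradiction closes the proof.
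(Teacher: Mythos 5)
First, note that the paper does not prove this lemma at all: it is quoted verbatim as Theorem~2.23 of Lasserre's book \cite{Lassere_book}, so there is no in-paper argument to compare against. Your overall strategy is indeed the classical one for Krivine--Stengle (separate $f$ from the cone $C$ by a positive functional, then represent that functional by a measure on $K$ and derive a contradiction from $f>0$ on $K$), and your first two steps are essentially sound once the justification is repaired: $C$ need not be closed in the finest locally convex topology, and closedness is not what the separation requires. What one actually uses is that Assumption A together with $0\le \widehat g_i\le 1$ makes $C$ Archimedean, so that $1$ is an order unit, i.e.\ an algebraic interior point of $C$; Eidelheit separation then gives $L\ge 0$ on $C$, $L(f)\le 0$, $L(1)=1$. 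You flag the Archimedean property as the main obstacle but only assert it; the proof is the standard subring argument (the set $\{p: \exists N,\ N\pm p\in C\}$ is a subring of $\mathbb{R}[x]$ because $C$ is closed under products, and it contains $1$ and each $g_i$, hence all of $\mathbb{R}[x]$ by Assumption A). That part is routine and fixable.

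The genuine gap is in your third step. Positivity of $L$ on $C$ plus the Archimedean bounds $N_\gamma\pm x^\gamma\in C$ give $|L(x^\gamma)|\le N_\gamma$, i.e.\ continuity of $L$ with respect to the \emph{order-unit} seminorm $\|p\|_C=\inf\{N: N\pm p\in C\}$ --- not with respect to $\|p\|_{C(K)}$. One always has $\|p\|_{C(K)}\le \|p\|_C$, but the inequality you need in order to invoke Riesz representation on $C(K)$ is the reverse one, and proving $\|p\|_C\le \|p\|_{C(K)}+\varepsilon$ amounts to showing $\sup_K p+\varepsilon-p\in C$, which is precisely the theorem you are trying to prove. (Stone--Weierstrass is also a red herring here: by Assumption A the algebra generated by the $\widehat g_i$ is already all of $\mathbb{R}[x]$, so density in $C(K)$ is not the issue; sup-norm continuity of $L$ is.) The standard way to close this gap is the Kadison--Dubois/Krivine representation argument: the state space $S=\{L: L\ge 0 \text{ on } C,\ L(1)=1\}$ is weak-$*$ compact by the Archimedean bounds, its extreme points are shown to be ring homomorphisms $\mathbb{R}[x]\to\mathbb{R}$ (this is the nontrivial lemma your sketch is missing), such homomorphisms are exactly evaluations at points of $K$ because they must send each $\widehat g_i$ into $[0,1]$, and Krein--Milman then gives $L(f)\ge \min_K f>0$, the desired contradiction. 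Alternatively one can use Marshall's purely algebraic maximal-preprime argument. Without one of these ingredients the passage from ``$L\ge 0$ on $C$'' to ``$L$ is integration against a measure on $K$'' does not follow.
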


We now establish the asymptotic convergence of the SDP-SOCP hierarchy as well as the duality between the relaxation problems of the hierarchy and its dual, by making use of the Krivine-Stengle's certificate of positivity.

\begin{theorem}{\bf (Convergence of bounded degree SDP-SOCP hierarchy and duality)} \label{thm:1}
For problem~\eqref{P}, let $M$ be a positive number such that $\displaystyle M > \max_{1 \le i\le m}\sup_{x \in K}\{g_i(x)\}$ and denote $\widehat{g}_i(x)=\frac{g_i(x)}{M}$.  Let $n_1, n_2\in\mathbb{N}$,  $0\le n_1, n_2\le n, n=n_1+n_2$ and fix a positive
even number ${r} \in \mathbb{N}$.
Suppose that Assumption A holds. Then,
\begin{itemize}
\item[{\rm (i)}] ${\rm val}\eqref{SDP-SOCP} \le {\rm val} ({\rm RP}_{k+1}^r) \le {\rm val} ({\rm DRP}_{k+1}^r) \le {\rm val} ({\rm DRP}_{k}^r) \le {\rm val}\eqref{P}$ for all $k\in\mathbb{N}$, and
\[
\lim_{k \rightarrow \infty} {\rm val}\eqref{SDP-SOCP} =\lim_{k \rightarrow \infty} {\rm val}({\rm DRP}_k^r)={\rm val}~\eqref{P}.
\]
\item[{\rm (ii)}] If the feasible set $K$ of (P) has a nonempty interior and there exists $x_0\in K$
such that ${g}_i(x_0)>0$ for all $i = 1,\ldots,m$, then,  for each $k \in \mathbb{N}$
\[
{\rm val}\eqref{SDP-SOCP}={\rm val}({\rm DRP}_k^r)
\]
and the optimal value of ${\rm val}({\rm RP}_k^r)$ is attained.
\end{itemize}
\end{theorem}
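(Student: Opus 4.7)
The plan is to prove part (i) by chasing the chain of inequalities, then using the Krivine--Stengle representation to squeeze the limits; part (ii) will follow from conic strong duality once a Slater-type interior point is produced in the dual.

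For part (i), I would establish each inequality separately. Monotonicity of the primal values in $k$ is immediate: every tuple $(\mu,\{c_{\bf p,q}\},\sigma_1,\sigma_2)$ feasible for $({\rm RP}_k^r)$ extends to one feasible for $({\rm RP}_{k+1}^r)$ by padding with zero coefficients at the new level. Weak duality ${\rm val}({\rm RP}_k^r)\le{\rm val}({\rm DRP}_k^r)$ comes from applying the Riesz functional $L_y$ to the primal equation $f-\sum c_{\bf p,q}h_{\bf p,q}-\mu=\sigma_1+\sigma_2$: the moment matrix constraint makes $L_y(\sigma_1)\ge0$ for every $\sigma_1\in\Sigma^2_{n_1,r}$, and the SOCP conditions on ${\bf M}_{r/2}^{X_2}(y)$ (which characterize dual feasibility for the SDSOS cone via Proposition~\ref{prop:1-SDP}) make $L_y(\sigma_2)\ge0$ for every $\sigma_2\in{\bf SDSOS}_{n_2,r}$; together with $L_y(h_{\bf p,q})\ge0$ and $y_1=1$, this yields $\mu\le L_y(f)$. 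The comparison between consecutive dual values follows by restricting a feasible $y$ at one level to the moments it needs at the other level: since the moment-matrix constraints only involve entries up to degree $r$ and the constraints $L_y(h_{\bf p,q})\ge0$ are nested in $k$, feasibility transfers while preserving the value $L_y(f)$, yielding the monotonicity in $k$ along the dual chain. Finally, for any $\bar x\in K$ the Dirac moment vector $y_\alpha=\bar x^{i(\alpha)}$ is feasible for $({\rm DRP}_k^r)$ (note $\widehat g_i(\bar x)\in[0,1]$, so $h_{\bf p,q}(\bar x)\ge0$, and the rank-one PSD structure trivially satisfies both the semidefinite and SOCP constraints) and $L_y(f)=f(\bar x)$, giving ${\rm val}({\rm DRP}_k^r)\le{\rm val}\eqref{P}$.

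The convergence step is where the Krivine--Stengle certificate does the work. Fix $\varepsilon>0$; then $f-({\rm val}\eqref{P}-\varepsilon)$ is strictly positive on the compact set $K$, so by the Krivine--Stengle lemma there exist finitely many nonnegative scalars $c_{\bf p,q}$ such that
\[
f-({\rm val}\eqref{P}-\varepsilon)=\sum_{{\bf p,q}}c_{\bf p,q}\prod_{i=1}^m\widehat g_i^{p_i}(1-\widehat g_i)^{q_i}.
\]
Choosing $k_\varepsilon$ larger than the maximum of $|{\bf p}|+|{\bf q}|$ appearing in this finite sum, the tuple $(\mu={\rm val}\eqref{P}-\varepsilon,\,c,\,\sigma_1=0,\,\sigma_2=0)$ is feasible for $({\rm RP}_{k_\varepsilon}^r)$. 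Hence ${\rm val}({\rm RP}_{k_\varepsilon}^r)\ge{\rm val}\eqref{P}-\varepsilon$, and combining with the chain of inequalities already established and the upper bound by ${\rm val}\eqref{P}$ gives $\lim_k{\rm val}({\rm RP}_k^r)=\lim_k{\rm val}({\rm DRP}_k^r)={\rm val}\eqref{P}$.

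For part (ii), the Slater-type assumption lets me produce a strictly feasible point of the dual. Pick a small ball $B\subset{\rm int}\,K$ centred at $x_0$ on which each $g_i$ stays strictly positive, and let $y_\alpha=\int_B x^{i(\alpha)}\,d\nu$ for the uniform probability measure $\nu$ on $B$. Then $L_y(h_{\bf p,q})=\int_B h_{\bf p,q}\,d\nu>0$, and because $\nu$ has $n$-dimensional support, the moment matrix ${\bf M}_{r/2}(y)$ is strictly positive definite; in particular ${\bf M}_{r/2}^{X_1}(y)\succ0$ and the $2\times2$ principal minors of ${\bf M}_{r/2}^{X_2}(y)$ satisfy strict inequalities, so the SOCP constraints of $({\rm DRP}_k^r)$ (read via Proposition~\ref{prop:1-SDP}) hold strictly after a suitable choice of the decomposition $Q^2=\sum M^{ij}$. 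With a strictly feasible dual and a finite primal value, conic strong duality for mixed SDP-SOCP programs yields ${\rm val}({\rm RP}_k^r)={\rm val}({\rm DRP}_k^r)$ and attainment of the primal supremum.

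The main obstacle will be verifying that the SOCP portion of the dual can be made strictly feasible: strict positive-definiteness of ${\bf M}_{r/2}^{X_2}(y)$ does not automatically produce an interior-point SDSOS decomposition, and I would need to use the explicit generalized diagonally dominant decomposition (together with a small perturbation of the diagonal if needed) to place the multipliers $M^{ij}$ strictly inside the second-order cones. Once this interior-point existence is in hand, the rest of part (ii) is a routine application of the standard conic duality theorem.
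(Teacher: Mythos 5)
Your proposal is correct and follows essentially the same route as the paper: Krivine--Stengle applied to $f-({\rm val}(P)-\varepsilon)$ for the convergence of the primal values, Dirac moment vectors of feasible points for the bound ${\rm val}({\rm DRP}_k^r)\le{\rm val}(P)$, and a probability measure with full-dimensional support (the paper uses the normalized Lebesgue measure on all of $K$ plus a separate open neighbourhood of $x_0$, rather than your single ball around $x_0$) to obtain a strictly feasible dual point and invoke conic strong duality in part (ii). The ``obstacle'' you flag at the end is not one: in $({\rm DRP}_k^r)$ the second-order cone constraints are imposed directly on the entries of ${\bf M}_{r/2}^{X_2}(y)$, with no decomposition $Q^2=\sum M^{ij}$ to be chosen on the dual side, so strict positive definiteness of the moment matrix (hence strict $2\times 2$ principal-minor inequalities) already yields strict feasibility with no further perturbation argument.
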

\begin{proof} \ [Proof of {\rm (i)}] Firstly, from the construction of the problem~\eqref{SDP-SOCP}, ${\rm val}~\eqref{SDP-SOCP} \le {\rm val} ({\rm RP}_{k+1}^r)$ for all $k$. In addition, take any feasible point $x$ of problem~\eqref{P} and
any feasible point $\mu \in \mathbb{R}, c_{\bf p,q} \ge 0,
\sigma_1\in \Sigma^2_{n_1,r}, \sigma_2\in {\bf SDSOS}_{n_2,r}$ for \eqref{SDP-SOCP}.  By the construction of  \eqref{SDP-SOCP},  we have $\widehat{g}_i(x) \ge 0$ and $1-\widehat{g}_i(x) \ge 0$. Then,
 \[
  f(x)=\sum_{{\bf p,q} \in {(\mathbb{N}_0)^m}, |{\bf p}|+|{\bf q}| \le k}c_{{\bf p, q}}\prod_{i=1}^{m} \widehat{g}_i(x)^{p_i}(1-\widehat{g}_i(x))^{q_i}+\mu+\sigma_1(x)+\sigma_2(x) \ge \mu.
 \]
So, ${\rm val}~\eqref{SDP-SOCP} \le {\rm val} ({\rm RP}_{k+1}^r) \le {\rm val}\eqref{P}$ for all $k$. In particular, $\lim_{k \rightarrow \infty} {\rm val}\eqref{SDP-SOCP}$ exists and
\[
\lim_{k \rightarrow \infty} {\rm val}\eqref{SDP-SOCP}\le {\rm val}\eqref{P}.
\]

To see the reverse inequality, let $\epsilon>0$ and denote
\[
v_k:=  \sup_{\mu \in \mathbb{R}, c_{\bf p,q} \ge 0}\left\{\mu: f-\sum_{{\bf p,q} \in {(\mathbb{N}_0)^m}, |{\bf p}|+|{\bf q}| \le k}c_{{\bf p, q}}\prod_{i=1}^{m} \widehat{g}_i^{p_i}(1-\widehat{g}_i)^{q_i}-\mu \equiv 0\right\}.
\]
It is clear that $v_k \le {\rm val}\eqref{SDP-SOCP}$. Moreover, as $f-{\rm val}\eqref{P}+\epsilon >0$ over $K=\{g_i(x) \ge 0, i=1,\ldots,m\}$, the Krivine-Stengle’s certificate implies that there exists $k_0$ such that
\[
f-\sum_{{\bf p,q} \in {(\mathbb{N}_0)^m}, |{\bf p}|+|{\bf q}| \le {k_0}}c_{{\bf p, q}}\prod_{i=1}^{m} \widehat{g}_i^{p_i}(1-\widehat{g}_i)^{q_i}-({\rm val}\eqref{P}-\epsilon) \equiv 0
\]
and so, $v_{k_0} \ge {\rm val}\eqref{P}-\epsilon$. This shows that $\limsup_{k \rightarrow \infty}v_k \ge {\rm val}\eqref{P}$. So,
\[
\lim_{k \rightarrow \infty} {\rm val} \eqref{SDP-SOCP} \ge  \limsup_{k \rightarrow \infty}v_k \ge {\rm val}\eqref{P}.
\]
Thus, $\lim_{k \rightarrow \infty} {\rm val} \eqref{SDP-SOCP}={\rm val}\eqref{P}$.

Now, note from the construction of $(DRP_k^r)$ and standard weak duality of convex conic problems, one has ${\rm val} ({\rm DRP}_k^r) \ge {\rm val} \eqref{SDP-SOCP}$ and ${\rm val} ({\rm DRP}_{k+1}^r) \le {\rm val} ({\rm DRP}_{k}^r)$. We now see that ${\rm val} ({\rm DRP}_k^r) \le {\rm val}(P)$.
To see this, let $\overline{x}$ be a solution for (P) (this is possible as the feasible set of (P) is compact). Let $\overline{y}=(\overline{y}_{\alpha})$ with $\overline{y}_{\alpha}=\overline{x}^{i(\alpha)}$. As $\overline{x}$ is feasible for (P), $h_{\bf p,q}(\bar x) \ge 0$, for all ${\bf p,q} \in {(\mathbb{N}_0)^m}, |{\bf p}|+|{\bf q}| \le k$, and so, $L_{\overline{y}}(h_{\bf p,q})=\sum_{\alpha}(h_{\bf p,q})_{\alpha} \overline{y}_{\alpha}=\sum_{\alpha}(h_{\bf p,q})_{\alpha} \overline{x}^{i(\alpha)} = h_{\bf p,q}(\overline{x}) \ge 0$. Moreover,  direct verification shows that
${\bf M}_{\frac{r}{2}}(\overline{y}) \succeq 0$ (this can also be seen by the fact that the Dirac measure for $\overline{x}$ is a representing measure of $\overline{y}$). Thus, $\overline{y}$ is feasible for $({\rm DRP}_k^r)$ and so, ${\rm val} ({\rm DRP}_k^r) \le L_{\overline{y}}(f)=f(\overline{x})={\rm val}(P)$. Therefore, we see that ${\rm val}\eqref{SDP-SOCP} \le {\rm val} ({\rm RP}_{k+1}^r) \le {\rm val} ({\rm DRP}_{k+1}^r) \le {\rm val} ({\rm DRP}_{k}^r) \le {\rm val}\eqref{P}$. Note that $\lim_{k \rightarrow \infty} {\rm val} \eqref{SDP-SOCP}={\rm val}\eqref{P}$. It follows that
\[
\lim_{k \rightarrow \infty} {\rm val}\eqref{SDP-SOCP} =\lim_{k \rightarrow \infty} {\rm val}({\rm DRP}_k^r)={\rm val}~\eqref{P}.
\]
So, {\rm (i)} follows.

[Proof of {\rm (ii)}]
Let $y$ be the sequence of
moments of the Lebesgue measure $\mu$ on $K$, scaled to be a probability measure, so that $y_1=L_y(1) = 1$.
From the construction, ${\bf M}_{\frac{r}{2}}(y) \succeq 0$. Indeed, we see that $M_{\frac{r}{2}}(y) \succ 0$. Otherwise, there exists $q \in \mathbb{R}^{s(\frac{r}{2},n)}$ with $q \neq 0$ such that $q^T{\bf M}_{\frac{r}{2}}(y) q =0$. Let $Q$ be a polynomial given by $Q(x)=\sum_\alpha q_{\alpha}x^{i(\alpha)}$. This implies that
\[
q^T{\bf M}_{\frac{r}{2}}(y) q= L_{y}(Q^2)=\int_K Q^2(x) \, d \mu=0.
\]
So, $Q= 0$ almost everywhere on $K$, and hence, $Q \equiv 0$ as $Q$ is a polynomial and $K$ has nonempty interior. This implies that $q=0$ which makes contradiction. So,
$M_{\frac{r}{2}}(y) \succ 0$. This implies that
\[
M_{\frac{r}{2}}^{X_1}(y) \succ 0 \mbox{ and } M_{\frac{r}{2}}^{X_2}(y) \succ 0.
\]
%
In particular,
the latter relation implies that
\[
\|\bigg(\begin{array}{c}    2({\bf M}_{\frac{r}{2}}^{X_2}(y))_{ij}\\
                                                      ({\bf M}_{\frac{r}{2}}^{X_2}(y))_{ii}-({\bf M}_{\frac{r}{2}}^{X_2}(y))_{jj}
                                                                                  \end{array}\bigg)\| < ({\bf M}_{\frac{r}{2}}^{X_2}(y))_{ii}+({\bf M}_{\frac{r}{2}}^{X_2}(y))_{jj}, 1 \le i,j \le s(\frac{r}{2},n_2).
\]
 From our assumption, there exists $x_0\in K$
such that ${g}_i(x_0)>0$ for all $i = 1,\ldots,m$. Recall that $\widehat{g}_i(x)=\frac{g_i(x)}{M}$ and $\displaystyle M > \max_{1 \le i\le m}\sup_{x \in K}\{g_i(x)\}$. So,
\[
0< \widehat{g}_i(x_0)<1 \mbox{ for all } i = 1,\ldots,m.\]
Note that
\[
h_{{\bf p,q}}(x)= \prod_{i=1}^{m} \widehat{g}_i^{p_i}(1-\widehat{g}_i)^{q_i}, \ \ {\bf p,q} \in {(\mathbb{N}_0)^m}, |{\bf p}|+|{\bf q}| \le k.
\]
Thus, there exists an open set $U \subseteq K$ such that for all $x \in U$
\[
h_{\bf p,q}(x) >0 \mbox{ for all } {\bf p,q} \in {(\mathbb{N}_0)^m}, |{\bf p}|+|{\bf q}| \le k.
\]
So,
\[
L_y(h_{\bf p,q})=\int_K h_{\bf p,q}(x) \, d \mu \ge \int_U h_{\bf p,q}(x) \, d \mu >0.
\]
Thus, $y$ is a strictly feasible solution of $(DRP_k^r)$. Then, the conclusion of {\rm (ii)} follows from standard strong duality result of convex conic problems.
\end{proof}

\medskip
In the case of $n_1=n$ and $n_2=0$, the SDP-SOCP relaxation problem~\eqref{SDP-SOCP} reduces to the following semidefinite programming  (SDP) problems which is known as the bounded degree Lasserre SDP hierarchy  \cite{Lag,Bounded_SOS} \begin{equation}\label{SDP}
({\rm SDP}_{k}^{r}) \ \ \  \sup_{\substack{\mu \in \mathbb{R}, c_{\bf p,q} \ge 0, \\
\sigma \in  \Sigma^2_{n,r}}}\left\{\mu\mid f-\sum_{{\bf p,q} \in {(\mathbb{N}_0)^m}, |{\bf p}|+|{\bf q}| \le k}c_{{\bf p, q}}\prod_{i=1}^{m} \widehat{g}_i^{p_i}(1-\widehat{g}_i)^{q_i}-\mu =\sigma\right\}.  \end{equation}
Its corresponding Lagrangian dual can be written as
  \begin{eqnarray}
({\rm DSDP}_k^r) & \displaystyle \inf_{\substack{y=(y_\alpha)\in \mathbb{R}^{s(D,n)}}} & L_y(f) \nonumber \\
& & L_y (h_{\bf p,q}) \ge 0, \, {\bf p,q} \in {(\mathbb{N}_0)^m}, |{\bf p}|+|{\bf q}| \le k, \nonumber \\
& & y_1=1,\nonumber \\
 && {\bf M}_{\frac{r}{2}}(y) \succeq 0,  \  
\end{eqnarray}
where $D= \max\{kd, r\}$. So, we obtain an asymptotic convergence of the bounded degree SDP hierarchy for the problem~\eqref{P} (see \cite{Bounded_SOS}) as follows.

\begin{corollary}{\bf (Convergence of bounded degree  Lasserre SDP hierarchy and duality)} \label{thm:1-SDP}
For problem~\eqref{P}, let $M$ be a positive number such that {{$\displaystyle M >\max_{1 \le i\le m}\sup_{x \in K}\{g_i(x)\}$}} and denote $\widehat{g}_i(x)=\frac{g_i(x)}{M}$.  Fix a positive
even number ${r} \in \mathbb{N}$.
Suppose that Assumption A holds. Then,
\begin{itemize}
\item[{\rm (i)}] ${\rm val}({\rm SDP}_k^r) \le {\rm val} ({\rm SDP}_{k+1}^r) \le  {\rm val} ({\rm DSDP}_{k+1}^r) \le {\rm val} ({\rm DSDP}_{k}^r) \le {\rm val}\eqref{P}$ for all $k\in\mathbb{N}$, and
\[
\lim_{k \rightarrow \infty} {\rm val}({\rm SDP}_k^r) =\lim_{k \rightarrow \infty} {\rm val}({\rm DSDP}_k^r)={\rm val}\eqref{P}.
\]
\item[{\rm (ii)}] If the feasible set $K$ of (P) has a nonempty interior and there exists $x_0\in K$
such that ${g}_i(x_0)>0$ for all $i = 1,\ldots,m$, then,  for each $k \in \mathbb{N}$
\[
{\rm val}({\rm SDP}_k^r)={\rm val}({\rm DSDP}_k^r)
\]
and the optimal value of ${\rm val}({\rm SDP}_k^r)$ is attained.
\end{itemize}


\end{corollary}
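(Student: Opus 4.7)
The plan is to obtain the corollary as a direct specialization of Theorem~\ref{thm:1} with the partition $n_1 = n$, $n_2 = 0$. Under this choice, the convention ${\bf SDSOS}_{n_2,r} := \{0\}$ for $n_2 = 0$ forces $\sigma_2 \equiv 0$ in the representation $f - \sum_{|{\bf p}|+|{\bf q}| \le k} c_{\bf p,q} h_{\bf p,q} - \mu = \sigma_1 + \sigma_2$, so the primal relaxation $({\rm RP}_k^r)$ collapses to $({\rm SDP}_k^r)$ upon identifying $\sigma := \sigma_1 \in \Sigma^2_{n,r}$.

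First I would make the reduction precise on the dual side. With $n_1 = n$ and $n_2 = 0$, the tuple $X_1 = (x_1, \ldots, x_n)$ coincides with the full variable vector, so the moment submatrix ${\bf M}_{r/2}^{X_1}(y)$ is exactly the full moment matrix ${\bf M}_{r/2}(y)$. Meanwhile the SOCP constraints in $({\rm DRP}_k^r)$ are indexed by $1 \le i, j \le s(r/2, n_2) = s(r/2, 0)$, which is an empty range, so those constraints vanish. Consequently $({\rm DRP}_k^r)$ reduces to $({\rm DSDP}_k^r)$.

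Next I would invoke Theorem~\ref{thm:1}(i) applied to this partition to obtain the monotonicity chain ${\rm val}({\rm SDP}_k^r) \le {\rm val}({\rm SDP}_{k+1}^r) \le {\rm val}({\rm DSDP}_{k+1}^r) \le {\rm val}({\rm DSDP}_k^r) \le {\rm val}\eqref{P}$ together with the limit $\lim_k {\rm val}({\rm SDP}_k^r) = \lim_k {\rm val}({\rm DSDP}_k^r) = {\rm val}\eqref{P}$, establishing part (i). Part (ii) then follows from Theorem~\ref{thm:1}(ii): the Slater-type hypothesis is identical, and the strictly feasible dual point $y$ built from the normalized Lebesgue measure on $K$ still satisfies ${\bf M}_{r/2}(y) \succ 0$ and $L_y(h_{\bf p,q}) > 0$ for every admissible ${\bf p,q}$, which, by strong conic duality, delivers ${\rm val}({\rm SDP}_k^r) = {\rm val}({\rm DSDP}_k^r)$ together with attainment in the primal.

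The only substantive item to verify is that the specialization $n_2 = 0$ is actually admissible within the framework of Section~3, which is immediate from the conventions introduced when defining $U_d(n_1, n_2)$. In this sense the corollary is not an independent result but a transparent special case of the theorem, and I do not expect any genuine obstacle beyond bookkeeping.
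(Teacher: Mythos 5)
Your proposal is correct and follows essentially the same route as the paper: the paper's proof likewise obtains the corollary by specializing Theorem~\ref{thm:1} to the case $n_1=n$, $n_2=0$, after noting that the relaxation $({\rm SDP}_k^r)$ reformulates as a pure semidefinite program. Your additional bookkeeping on the dual side (the SOCP constraints vanishing for $n_2=0$ and ${\bf M}_{r/2}^{X_1}(y)$ becoming the full moment matrix) is a faithful elaboration of what the paper leaves implicit.
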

\begin{proof} In this setting,  the problem~\eqref{SDP} is equivalently reformulated as the following semidefinite programming problem:
\begin{eqnarray*}
\sup_{\substack{\mu \in \mathbb{R}, c_{\bf p, q} \ge 0, \\ Q \in S^{s(\frac{r}{2},n)}}} \{\mu&\mid& f_{\alpha}-\sum_{{\bf p,q} \in {(\mathbb{N}_0)^m}, |{\bf p}|+|{\bf q}| \le k}c_{{\bf p,q}} (h_{\bf p,q})_\alpha-\mu =
\sum_{\substack{1 \le \beta,\gamma \le s(\frac{r}{2},n),\\ i(\beta)+i(\gamma)=i(\alpha)}} Q_{\beta \gamma},\\
& & Q\in S^{ s(\frac{r}{2},n)}_+ \}.
\end{eqnarray*} So, the proof follows from Theorem~\ref{thm:1} by considering $n_1=n$ and $n_2=0.$
\end{proof}

\medskip
In the case of $n_1=0$ and $n_2=n$, the SDP-SOCP relaxation problem~\eqref{SDP-SOCP} reduces to the following second-order cone (SOCP) program \begin{equation}\label{SOCP} ({\rm SOCP}_{k}^{r}) \ \ \ \sup_{\substack{\mu \in \mathbb{R}, c_{\bf p,q} \ge 0, \\
\sigma \in {\bf SDSOS}_{n,r}}}\left\{\mu\mid f-\sum_{{\bf p,q} \in {(\mathbb{N}_0)^m}, |{\bf p}|+|{\bf q}| \le k}c_{{\bf p, q}}\prod_{i=1}^{m} \widehat{g}_i^{p_i}(1-\widehat{g}_i)^{q_i}-\mu =\sigma\right\}.  \end{equation}
Its corresponding Lagrangian dual can be written as
  \begin{eqnarray}
({\rm DSOCP}_k^r) & \displaystyle \inf_{\substack{y=(y_\alpha)\in \mathbb{R}^{s(D,n)}}} & L_y(f) \nonumber \\
& & L_y (h_{\bf p,q}) \ge 0, \, {\bf p,q} \in {(\mathbb{N}_0)^m}, |{\bf p}|+|{\bf q}| \le k, \nonumber \\
& & y_1=1,\nonumber \\
& & \|\bigg(\begin{array}{c}    2({\bf M}_{\frac{r}{2}}(y))_{ij}\\
                                                      ({\bf M}_{\frac{r}{2}}(y))_{ii}-({\bf M}_{\frac{r}{2}}(y))_{jj}
                                                                                 \end{array}\bigg)\| \le ({\bf M}_{\frac{r}{2}}(y))_{ii}+({\bf M}_{\frac{r}{2}}(y))_{jj}, 1 \le i,j \le s(\frac{r}{2},n). \nonumber
\end{eqnarray}
where $D= \max\{kd, r\}$.
So, we obtain an asymptotic convergence of the bounded degree SOCP hierarchy for the problem~\eqref{P} as follows.

\begin{corollary}{\bf (Convergence of bounded degree SOCP hierarchy and duality)} \label{thm:1-SOCP}
For problem~\eqref{P}, let $M$ be a positive number such that {{$\displaystyle M > \max_{1 \le i\le m}\sup_{x \in K}\{g_i(x)\}$}} and denote $\widehat{g}_i(x)=\frac{g_i(x)}{M}$.  Fix a positive
even number ${r} \in \mathbb{N}$.
Suppose that Assumption A holds. Then,
\begin{itemize}
\item[{\rm (i)}] ${\rm val}({\rm SOCP}_{k}^r) \le {\rm val} ({\rm SOCP}_{k+1}^r) \le  {\rm val} ({\rm DSOCP}_{k+1}^r) \le {\rm val} ({\rm DSOCP}_{k}^r) \le {\rm val}\eqref{P}$ for all $k\in\mathbb{N}$, and
\[
\lim_{k \rightarrow \infty} {\rm val}({\rm SOCP}_{k}^r) = \lim_{k \rightarrow \infty}{\rm val} ({\rm DSOCP}_{k}^r)= {\rm val}\eqref{P}.
\]
\item[{\rm (ii)}] If the feasible set $K$ of (P) has a nonempty interior and there exists $x_0\in K$
such that ${g}_i(x_0)>0$ for all $i = 1,\ldots,m$, then,  for each $k \in \mathbb{N}$
\[
{\rm val}({\rm SOCP}_k^r)={\rm val}({\rm DSOCP}_k^r)
\]
and the optimal value of ${\rm val}({\rm SOCP}_k^r)$ is attained.
\end{itemize}

\end{corollary}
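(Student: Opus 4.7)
The plan is to recognize $(\mathrm{SOCP}_k^r)$ and $(\mathrm{DSOCP}_k^r)$ as the specialization of the mixed hierarchy $(\mathrm{RP}_k^r)$ and $(\mathrm{DRP}_k^r)$ obtained by setting $n_1=0$ and $n_2=n$, and then simply invoke Theorem~\ref{thm:1}. Under this choice, the convention $\Sigma^2_{n_1,r}:=\{0\}$ (for $n_1=0$) collapses the decomposition $\sigma=\sigma_1+\sigma_2$ in \eqref{SDP-SOCP} to a single SDSOS polynomial $\sigma\in {\bf SDSOS}_{n,r}$, so $(\mathrm{RP}_k^r)$ becomes exactly $(\mathrm{SOCP}_k^r)$.

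For the dual, with $n_1=0$ and $n_2=n$ we have $X_1$ empty and $X_2=(x_1,\ldots,x_n)$, so the moment submatrix ${\bf M}_{r/2}^{X_2}(y)$ coincides with the full moment matrix ${\bf M}_{r/2}(y)$, and the PSD block on $X_1$ is vacuous. The second-order cone inequalities in $(\mathrm{DRP}_k^r)$ therefore reduce verbatim to the constraints in $(\mathrm{DSOCP}_k^r)$; I would also note (as in the proof of Corollary~\ref{thm:1-SDP}) that one can first rewrite $(\mathrm{SOCP}_k^r)$ as a pure SOCP feasibility problem via part (iii) of Proposition~\ref{prop:1-SDP} applied with $n_1=0$, and then $(\mathrm{DSOCP}_k^r)$ is by construction its Lagrangian dual.

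Once these identifications are in place, parts (i) and (ii) of the corollary follow directly from the corresponding assertions in Theorem~\ref{thm:1}: the chain of inequalities and the limit equality in (i) are the $n_1=0$, $n_2=n$ case of part (i) of the theorem, while the zero duality gap and attainment in (ii) follow from part (ii) of the theorem under the same Slater-type condition $g_i(x_0)>0$, $i=1,\ldots,m$, with $K$ having nonempty interior.

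There is essentially no technical obstacle here; the only point that requires a line of verification is that the convention $n_1=0 \Rightarrow \Sigma^2_{n_1,r}=\{0\}$ is consistent with the SOCP reformulation (which it is, by Proposition~\ref{prop:1-SDP}), so that the hypotheses and constructions of Theorem~\ref{thm:1} genuinely apply in this boundary case.
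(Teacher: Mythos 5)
Your proposal is correct and matches the paper's own proof, which likewise rewrites $({\rm SOCP}_k^r)$ as the second-order cone programming reformulation obtained from Proposition~\ref{prop:1-SDP} and then invokes Theorem~\ref{thm:1} with $n_1=0$ and $n_2=n$. Your additional remark checking that the convention $\Sigma^2_{0,r}=\{0\}$ makes the boundary case consistent is a sensible (if brief) supplement to what the paper leaves implicit.
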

\begin{proof} In this setting,  the problem~\eqref{SOCP} is equivalently reformulated as the following  second-order cone programming problem:
\begin{eqnarray*}
\sup_{\substack{\mu \in \mathbb{R}, c_{\bf p, q} \ge 0, \\ Q, M^{ij} \in S^{s(\frac{r}{2},n)}}} \{\mu&\mid& f_{\alpha}-\sum_{{\bf p,q} \in {(\mathbb{N}_0)^m}, |{\bf p}|+|{\bf q}| \le k}c_{{\bf p,q}} (h_{\bf p,q})_\alpha-\mu =
 \sum_{\substack{1 \le \beta,\gamma \le s(\frac{r}{2},n),\\ i(\beta)+i(\gamma)=i(\alpha)}} Q_{\beta \gamma}, \\
& &  Q=\sum_{1 \le i,j \le s(\frac{r}{2},n)} M^{ij}, \\
& & M^{ij}_{\beta\gamma}=0,\, \forall \ (\beta,\gamma) \notin \{(i,i),(j,j),(i,j),(j,i)\}, \\
& &
 \|\bigg(\begin{array}{c}    2 M^{ij}_{ij}\\
                                                        M^{ij}_{ii}-M^{ij}_{jj}
                                                                                  \end{array}\bigg)\| \le M^{ij}_{ii}+M^{ij}_{jj}, 1 \le i,j \le s(\frac{r}{2},n) \}.
\end{eqnarray*} So, the proof follows from Theorem~\ref{thm:1} by considering $n_1=0$ and $n_2=n$.
\end{proof}

Next, we present a numerical example to illustrate the proposed bounded degree second-order cone programming hierarchy.

\begin{example}
{Let $n \in \mathbb{N}$ with $n \ge 5$ and consider the following nonconvex polynomial problem:
\begin{eqnarray*}
(EP1) & \displaystyle \min_{x \in \mathbb{R}^n} & \sum_{i=1}^nx_i^{4}- n x_1x_2x_3x_4 \\
& \mbox{ s.t. } & 1-\sum_{i=1}^nx_i^{2} \ge 0, \\
& & 0 \le x_i \le 1, i=1,\ldots,n.
\end{eqnarray*}
By solving the KKT condition, it can be directly verified that the  optimal value is $\frac{4-n}{16}$ and is attained at $x^*=(\frac{1}{2},\frac{1}{2},\frac{1}{2},\frac{1}{2},\underbrace{0,\ldots,0}_{n-4})^T$.

Clearly, Assumption A holds for (EP1). Let $r=4$. For $n=20,30,40$, we solve the $kth$ $(k=1,2)$ approximation problems within the bounded degree second-order cone programming hierarchy and the one within the bounded degree  SOS hierarchy using the polynomial optimization toolbox
SPOT \cite{spot} and the conic
 program solver MOSEK. More explicitly, we first use the polynomial optimization toolbox
SPOT to convert the corresponding relaxation problem within the bounded degree second-order cone programming hierarchy (resp. bounded degree
SOS hierarchy) into a second-order cone program (resp. semi-definite program), then we solve the resulting conic programming problem via the conic
 program solver MOSEK.
We also compare the computation results with the standard SOS hierarchy implemented by the software {Gloptipoly 3} \cite{Gloptpoly3}. The numerical tests are conducted
on a computer with a $2.8$GHz Intel Core $i7$ and $8$GB RAM, equipped with {MATLAB} R2015b.

{\rm  \begin{center}
\begin{tabular}{|l|l|l|l|}
                           \hline
                           $(n,k)$ & Bounded degree  & Bounded degree  & {Gloptipoly 3}   \\
                           & 2nd-order cone hierarchy & SOS hierarchy & \\ \hline
                           $(20,1)$ & output value=$-\infty$ & output value=$-\infty$ &  output value=$-\infty$ \\  \hline
                           $(20,2)$ & output value= $-1.0000$ & output value=$-1.0000$ &output value=$-1.0000$  \\
                            & time=$9.1787$ & time=$277.1627$  & time=$5998.2367$ \\
                            & optimal value: Yes & optimal value: Yes & optimal value: Yes \\
                                                       \hline
                                              $(30,1)$ & output value=$-\infty$ & output value=$-\infty$  & out of memory \\ \hline
                           $(30,2)$ & output value= $-1.6250$ & out of memory  & out of memory \\
                            & time=$40.3081$ &    &  \\
                                                                              & optimal value: Yes &  & \\ \hline
                                                     $(40,1)$ & output value=$-\infty$ & output value=$-\infty$  & out of memory \\ \hline
                           $(40,2)$ & output value= $-2.250$ & out of memory  & out of memory \\
                            & time=$136.1732$ &    & \\
                            & optimal value: Yes &  & \\ \hline
                         \end{tabular}\\
Table 1
\end{center}}
Table 1 summarizes the computed optimal value as well as the CPU time used (measured in seconds). As one can see
 from the table, the 2nd approximation problem in the bounded degree second-order cone programming hierarchy reaches the true optimal value
 of the underlying polynomial optimization problem for all the cases. On the other hand, the bounded degree SOS hierarchy and the SOS hierarchy
 implemented in {Gloptipoly 3} runs out of memory for the cases $n=30,40$.

}
\end{example}
As shown in \cite{Bounded_SOS}, unlike the linear programming hierarchy which does not exhibits finite convergence even
for simple convex programming problems, the bounded degree SOS hierarchy can be exact for a  class of convex polynomial program called SOS-convex polynomial program covering convex quadratic programming problem.
We will see in the next Section that similar conclusions hold for the bounded degree SOCP hierarchy. Indeed, we will show that the bounded degree SOCP hierarchy is exact for a class of nonconvex polynomial program
with suitable sign structure and a class of new convex polynomial program called SOCP-convex polynomial program.

\section{One-Step SOCP Hierarchy Convergence }
In this section,  we   establish one-step convergence of the
bounded degree  SOCP hierarchy   for two classes of polynomial optimization problems. We achieve these results using new  representations of nonnegative polynomials in terms of SDSOS polynomials.

\subsection{SOCP Convexity}

Recall that a differentiable function $f$ on $\mathbb{R}^n$ is convex if and only if
$h_f(x,y):=f(x)-f(y)- \nabla f(y)^T(x-y) \ge 0$ for each $x,y \in \mathbb{R}^n$.

\begin{definition}{\bf (SOCP-Convex Polynomial)}\cite{jeya-li-arxiv}
{\rm Let $f$ be a polynomial on $\mathbb{R}^n$ with degree $d$, where $d \in {\mathbb{N}_0}$. 
Let $h_f$ be a polynomial defined by
$h_f(x,y):= f(x)-f(y)- \nabla f(y)^T(x-y).$
 We say that $f$ is {\it SOCP-convex} whenever $h_f$ is an SDSOS polynomial in the variable of $(x,y)$.}
\end{definition}

Note from the definition that any SOCP-convex polynomial is convex, and for any SOCP-convex polynomials $f,g$ and $\lambda \ge 0$, $f+g$ and $\lambda f$ are also SOCP-convex. An important and interesting feature for SOCP-convexity is that checking whether a given polynomial is SOCP-convex or not  can be done by solving  a  second-order cone programming problem.
Below, we provide simple examples for SOCP-convex polynomials.

\begin{example}{\bf (Examples of SOCP-convex polynomials)}
The following functions are SOCP-convex polynomials
\begin{itemize}
\item[{\rm (i)}] Any separable convex quadratic function with the form that $f=\sum_{i=1}^n v_{i} x_{i}^{2}+ w_i x_i +\alpha_i$, $v_i,w_i,\alpha_i \in \mathbb{R}$ is an
SOCP convex polynomial. To see this, we note that $f(x)-f(y)- \nabla f(y)^T(x-y)= \sum_{i=1}^n [v_i x_i^2-v_iy_i^2 -2v_iy_i(x_i-y_i)]=\sum_{i=1}^nv_i (x_i-y_i)^2$ is an SDSOS polynomial.

 \item[{\rm (ii)}] Any polynomial $f(x)=\sum_{i=1}^n  \gamma_{i} x_{i}^{2d}$ with $\gamma_{i} \ge 0$,  is SOCP-convex. Indeed,
 for any $x,y \in \mathbb{R}^n$ \[
h_f(x,y)= \sum_{i=1}^n \gamma_i (x_i^{2d}+(2d-1)y_i^{2d}-2d y_i^{2d-1}x_i) \ge 0.
\]
This implies that $h_i(x_i,y_i):=\gamma_i( x_i^{2d}+(2d-1)y_i^{2d}-2d y_i^{2d-1}x_i)$ is a nonnegative polynomial on $\mathbb{R}^2$. Direct verification shows that $h_i$ is SDSOS (this
can also be seen from Proposition \ref{th:2} later). Thus, $h_f$ is also an SDSOS polynomial. So, $f=\sum_{i=1}^n  \gamma_{i} x_{i}^{2d}$ with $\gamma_{i} \ge 0$,  is SOCP-convex.

 \item[{\rm (iii)}] An SOCP-convex polynomial with degree greater than $2$ need not to be separable. For example, consider the convex polynomial $f(x_1,x_2)=x_1^4+x_2^4-2x_1^2x_2^2$. It follows that
 for all $(x,y) \in \mathbb{R}^2 \times \mathbb{R}^2$, $$h_f(x,y)=x_1^4+x_2^4-2x_1^2x_2^2-4x_1y_1^3-4x_2y_2^3-6y_1^2y_2^2+3y_1^4+3y_2^4 \ge 0.$$
 Direct calculations show that $h_f$ is SDSOS (this
can also be seen from Proposition \ref{th:2} later). Thus, $f$ is SOCP-convex.
\end{itemize}
\end{example}

\begin{proposition}{\bf (Representation of nonnegative SOCP-convex polynomials)}\label{prop:SOCP-convex}
Let $f$ be an SOCP-convex polynomial. 
Then  $f(x) \ge 0$ for all $x \in \mathbb{R}^n$ if and only if  $f$ is SDSOS.

\end{proposition}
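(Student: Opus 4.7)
\textbf{Proof proposal for Proposition \ref{prop:SOCP-convex}.} The forward implication is immediate: every SDSOS polynomial is, by construction, a finite sum of squares of affine combinations of monomials, hence a sum of squares, hence nonnegative on $\mathbb{R}^n$. The work is in the converse.

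For the nontrivial direction, my plan is to use convexity to reduce the claim about the values of $f$ to a statement about its first-order expansion $h_f$. Since $f$ is SOCP-convex it is in particular convex, and since $f \ge 0$ on all of $\mathbb{R}^n$ it is a convex polynomial bounded from below. By the classical Frank--Wolfe/Belousov theorem (every convex polynomial on $\mathbb{R}^n$ bounded below attains its infimum), there exists $\oy \in \mathbb{R}^n$ with $\nabla f(\oy)=0$ and $f(\oy)\ge 0$. Evaluating the defining polynomial $h_f(x,y)=f(x)-f(y)-\nabla f(y)^T(x-y)$ at $y=\oy$ and rearranging gives the identity
\[
f(x) \;=\; h_f(x,\oy) \;+\; f(\oy).
\]
So the result will follow once I show that $h_f(\cdot,\oy)$ is SDSOS in $x$ and that adding the nonnegative constant $f(\oy)$ preserves SDSOS.

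The key technical step is the following stability lemma: if $\sigma(x,y)$ is SDSOS in $(x,y)$ and $\oy\in\mathbb{R}^n$ is fixed, then the polynomial $x\mapsto \sigma(x,\oy)$ is SDSOS in $x$. I would verify this directly from the definition. Each term in the SDSOS decomposition of $\sigma(x,y)$ has one of the forms $\alpha_i\,m_i(x,y)^2$, $(\beta^+_{ij}m_i(x,y)+\gamma^+_{ij}m_j(x,y))^2$, or $(\beta^-_{ij}m_i(x,y)-\gamma^-_{ij}m_j(x,y))^2$ with nonnegative coefficients. Any monomial $m_i(x,y)$ factors as $m_i(x,\oy)=c_i\,\tilde m_i(x)$ where $\tilde m_i$ is a monomial in $x$ and $c_i\in\mathbb{R}$. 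The squared terms $\alpha_i c_i^2\,\tilde m_i(x)^2$ stay in SDSOS form with nonnegative coefficient. For the cross terms, each expression $(\beta c_i\,\tilde m_i(x)\pm \gamma c_j\,\tilde m_j(x))^2$ can be absorbed, by pulling the signs of $c_i,c_j$ into the choice of $+$ or $-$ and taking absolute values of the resulting coefficients, into one of the two canonical SDSOS patterns $(\beta'\tilde m_i\pm\gamma'\tilde m_j)^2$ with $\beta',\gamma'\ge 0$. Hence $\sigma(\cdot,\oy)\in\mathbf{SDSOS}_n$; applying this to $\sigma=h_f$ shows $h_f(\cdot,\oy)\in\mathbf{SDSOS}_n$.

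It remains to observe that adding the nonnegative constant $f(\oy)$ keeps us in the SDSOS cone, because $f(\oy)=\bigl(\sqrt{f(\oy)}\cdot 1\bigr)^2$ is a nonnegatively weighted square of the monomial $1$, and the SDSOS set is closed under addition. Thus $f=h_f(\cdot,\oy)+f(\oy)$ is SDSOS, completing the proof. The main obstacle I anticipate is a clean, book-keeping verification of the substitution lemma together with invoking (and properly citing) the Belousov--Frank--Wolfe type result that guarantees attainment of the infimum by a bounded-below convex polynomial; both are known but need to be stated carefully.
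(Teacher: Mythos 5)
Your proposal is correct and follows essentially the same route as the paper's proof: use the Frank--Wolfe/Belousov attainment property of bounded-below convex polynomials to find a global minimizer $\bar x$ with $\nabla f(\bar x)=0$, write $f = f(\bar x) + h_f(\cdot,\bar x)$, and specialize the SDSOS decomposition of $h_f(x,y)$ at $y=\bar x$. The only difference is that you spell out the substitution/sign-absorption lemma (that fixing $y=\bar x$ in an SDSOS decomposition yields an SDSOS polynomial in $x$), which the paper simply asserts.
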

\begin{proof} Denote the degree of the polynomial $f$ as $d$. Clearly, any  SDSOS polynomial is nonnegative. To see the reverse implication,
let $f$ be a nonnegative SOCP-convex polynomial. Then, $d$ is an even number. Moreover, $f$
is a convex polynomial which is bounded below by zero, and so, $f$ attains its global minimum at some point $\bar x$. This shows that $\nabla f(\bar x)=0$ and $f(\bar x) \ge 0$.
Moreover, let $h_f(z)=f(x)-f(y)-\nabla f(y)^T(x-y) \mbox{ for all } z=(x,y).$ From the definition of  SOCP-convexity of $f$,
$h_f$ is an SDSOS polynomial in the variable of $(x,y)$.
So, there exist
 $\hat{\alpha}_i,\hat{\beta}_{ij}^+,\hat{\beta}_{ij}^-,\hat{\gamma}_{ij}^+,\hat{\gamma}_{ij}^- \ge 0$ such that for all $z=(x,y)$
\begin{eqnarray*}
h_f(z) =\sum_{i=1}^{s(\frac{d}{2}, 2n)} \hat{\alpha}_i m_i^2(z)+\sum_{i,j=1,i \neq j}^{s(\frac{d}{2}, 2n)} (\hat{\beta}_{ij}^+ m_i(z)+\hat{\gamma}_{ij}^+ m_j(z))^2+\sum_{i,j=1,i \neq j}^{s(\frac{d}{2}, 2n)} (\hat{\beta}_{ij}^- m_i(z)-\hat{\gamma}_{ij}^- m_j(z))^2.
\end{eqnarray*}
where $m_i(z), m_j(z)$, $1 \le i, j \le s(\frac{d}{2}, 2n)$, are monomial of $z=(x,y)$ with degree up to $\frac{d}{2}$. In particular, let $z=(x,\bar x)$, it follows that there exist ${\alpha}_i,{\beta}_{ij}^+,{\beta}_{ij}^-,{\gamma}_{ij}^+,{\gamma}_{ij}^- \ge 0$ such that
\begin{eqnarray*}
h_f(x,\bar x) =\sum_{i=1}^{s(\frac{d}{2},n)} \alpha_i m_i^2(x)+\sum_{i,j=1, i \neq j}^{s(\frac{d}{2},n)} (\beta_{ij}^+ m_i(x)+\gamma_{ij}^+ m_j(x))^2+\sum_{i,j=1,i \neq j}^{s(\frac{d}{2},n)} (\beta_{ij}^- m_i(x)-\gamma_{ij}^- m_j(x))^2.
\end{eqnarray*}
where $m_i(x), m_j(x)$, $1 \le i, j \le s(\frac{d}{2},n)$, are monomials of $x$ with degree up to $\frac{d}{2}$.
Note that
\begin{eqnarray*}
f(x)-f(\bar x) =  f(x)-f(\bar x)-\nabla f(\bar x)^T(x-\bar x)
& = & h_f(x,\bar x).
\end{eqnarray*}
This shows that
$f(x)= f(\bar x)+ h_f(x,\bar x)$ is an SDSOS polynomial.
\end{proof}

The following simple example shows that the above SDSOS representation for nonnegative polynomial can fail for a convex quadratic function which is not SOCP-convex.
\begin{example} {\bf (Failure of SDSOS representation  in the absence of SOCP-convexity)}
Consider the convex quadratic function $f(x_1,x_2)=(x_1+x_2-1)^2$. We first see that the nonnegative representation in Proposition \ref{prop:SOCP-convex} fails.
Clearly,  $f$ is nonnegative. We now show that $f$ is not SDSOS.
Suppose on the contrary that there exist $\lambda_i \ge 0$, $i=1,\ldots,14$, such that
\begin{eqnarray*}
(x_1+x_2-1)^2&=& \lambda_0+\lambda_1 x_1^2 + \lambda_2 x_2^2 + (\lambda_3 x_1+\lambda_4 x_2)^2+(\lambda_5 x_1-\lambda_6 x_2)^2 \\
& & + (\lambda_7 x_1+\lambda_8)^2+(\lambda_9 x_1-\lambda_{10} )^2+ (\lambda_{11} x_2+\lambda_{12})^2+(\lambda_{13} x_2-\lambda_{14} )^2.
\end{eqnarray*}
By comparing the coefficients, one sees that
$\lambda_3 \lambda_4 \ge 1, \lambda_3^2 \le 1 \mbox{ and } \lambda_4^2 \le 1$.
This shows that $2 \le 2\lambda_3\lambda_4 \le \lambda_3^2+\lambda_4^2 \le 2$ which entails that $\lambda_3=\lambda_4= \pm 1$. So,
$\lambda_1=\lambda_2=\lambda_5=\lambda_6=0$ and $\lambda_7=\lambda_9=\lambda_{11}=\lambda_{13}=0$ and hence
\[
(x_1+x_2-1)^2=\lambda_0+ (x_1+x_2)^2+\lambda_8^2+\lambda_{10}^2+\lambda_{12}^2+\lambda_{14}^2,
\]
which is impossible. So, $f$ is not SDSOS.

Next, we observe that $f$ is not SOCP-convex. Indeed, using a similar line of method as above, one can show that
$h_f(x,y)= (x_1+x_2-y_1-y_2)^2$
 is not an SDSOS polynomial in the variable of $(x,y)$. Thus, $f$ is not an SOCP-convex polynomial.
\end{example}

We now see that, under an SOCP-convexity assumption,  problem (P) enjoys an exact SOCP relaxation, and so, one-step convergence holds for the
bounded degree SOCP hierarchy.
\begin{theorem}\label{th:9} {\bf (Exact SOCP relaxation under SOCP-convexity)}
Let $f,g_i$ be polynomials with an even degree $d$ and let $l=\frac{d}{2}$.
For problem (P), let $x^* \in {\rm Argmin}(P)$ and $\lambda^*=(\lambda_1^*,\ldots,\lambda_m^*) \in \mathbb{R}^m_+$ be a KKT multiplier associated with $x^*$.
Suppose that $f-\sum_{i=1}^m \lambda_i^* g_i$ is SOCP-convex. Then,
\begin{eqnarray}\label{eq:99}
\min(P) &= & \max_{\substack{\mu\in \mathbb{R}, \lambda_i \ge 0, \\
\sigma\in {\bf SDSOS}_{n,d}}}\{\mu \mid f-\sum_{i=1}^m\lambda_i g_i-\mu =\sigma\} \nonumber \\
& = & \max_{\substack{\mu\in \mathbb{R}, \lambda_i \ge 0,\\ M^{ij} \in S^{s(l,n)}}}\{\mu\mid f_{\alpha}-\sum_{i=1}^m\lambda_i (g_i)_{\alpha}-\mu=\sum_{\substack{1 \le \beta,\gamma \le s(l,n) \\ i(\alpha)=i(\beta)+i(\gamma)}} \sum_{1 \le i,j \le s(l,n)} M^{ij}_{\beta \gamma}, \nonumber \\
& & \ \ \ \ \ \ \ \ \ \ \ \ \ \    M^{ij}_{\beta \gamma}=0,\, \forall \ (\beta,\gamma) \notin \{(i,i),(j,j),(i,j),(j,i)\}, \nonumber  \\
& & \ \ \ \ \ \ \ \ \ \ \ \ \ \  \|\bigg(\begin{array}{c}    2 M^{ij}_{ij}\\
                                                        M^{ij}_{ii}-M^{ij}_{jj}
                                                                                  \end{array}\bigg)\| \le M^{ij}_{ii}+M^{ij}_{jj}, 1 \le i,j \le s(l,n)\}.
                                                                                  \end{eqnarray} In particular, one step convergence holds for the bounded degree SOCP hierarchy, that is, ${\rm val}\eqref{P}={\rm val}({\rm SOCP_1^{d}})$, whenever Assumption A holds.
\end{theorem}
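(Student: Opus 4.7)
The plan is to produce the SDSOS certificate \emph{directly} via the KKT information at $x^*$, and then invoke Proposition \ref{prop:SOCP-convex} (representation of nonnegative SOCP-convex polynomials) to match the form of the SDSOS dual. First, I would consider the shifted Lagrangian polynomial
\[
L(x) \; := \; f(x) - \sum_{i=1}^m \lambda_i^* g_i(x) - f(x^*).
\]
Subtracting the constant $f(x^*)$ from the SOCP-convex polynomial $f-\sum_i \lambda_i^* g_i$ preserves SOCP-convexity (the defining polynomial $h_L$ is unchanged). Using the KKT stationarity $\nabla f(x^*) = \sum_i \lambda_i^* \nabla g_i(x^*)$, the complementary slackness $\lambda_i^* g_i(x^*) = 0$, and the convexity of $L$, I would conclude that $\nabla L(x^*) = 0$ and $L(x^*) = 0$, so $L$ attains its global minimum value $0$ at $x^*$, i.e. $L(x) \ge 0$ on $\mathbb{R}^n$. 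Proposition \ref{prop:SOCP-convex} then delivers an SDSOS polynomial $\sigma \in {\bf SDSOS}_{n,d}$ with $L = \sigma$, which exhibits $\mu = f(x^*)$ as feasible in the first maximization problem. Combining with the easy weak-duality estimate (for any feasible triple $(\mu,\lambda,\sigma)$ and any $x \in K$ one has $f(x) - \mu = \sigma(x) + \sum_i \lambda_i g_i(x) \ge 0$), this gives the first equality $\min(\mathrm{P}) = \max\{\mu : f - \sum_i \lambda_i g_i - \mu = \sigma\}$.

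Next, for the second equality I would simply apply Proposition \ref{prop:1-SDP} with $n_1 = 0$, $n_2 = n$: the condition $\sigma \in {\bf SDSOS}_{n,d}$ is equivalent to the existence of the symmetric matrices $M^{ij}$ satisfying the second-order cone constraints displayed in the theorem, together with the linear identities matching the coefficients of $f - \sum_i \lambda_i g_i - \mu$ against $\sum_{i,j} M^{ij}$ in the monomial basis. This is a direct coefficient comparison, so I would just cite Proposition \ref{prop:1-SDP} and unpack it to obtain \eqref{eq:99}.

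Finally, for the one-step convergence claim, I would specialize $({\rm SOCP}_k^r)$ to $k=1$, $r=d$. At level $k=1$ the admissible products $\prod_i \widehat{g}_i^{p_i}(1-\widehat{g}_i)^{q_i}$ with $|{\bf p}|+|{\bf q}| \le 1$ are exactly $1$, $\widehat{g}_i$, and $1-\widehat{g}_i$, so the relaxation constraint reads
\[
f \;-\; c_{0,0} \;-\; \sum_{i=1}^m c_{e_i,0}\,\widehat{g}_i \;-\; \sum_{i=1}^m c_{0,e_i}\,(1-\widehat{g}_i) \;-\; \mu \;=\; \sigma.
\]
Choosing $c_{e_i,0} := M\lambda_i^*$ and all other $c_{{\bf p},{\bf q}} = 0$, and $\mu := f(x^*)$, reduces this identity to $f - \sum_i \lambda_i^* g_i - f(x^*) = \sigma$, which is precisely the SDSOS certificate constructed in the first step. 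Hence ${\rm val}({\rm SOCP}_1^{d}) \ge f(x^*) = \min(\mathrm{P})$, and Corollary \ref{thm:1-SOCP}(i) provides the reverse inequality ${\rm val}({\rm SOCP}_1^{d}) \le {\rm val}(\mathrm{P})$, yielding the announced one-step exactness.

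The main conceptual step --- and the only non-mechanical one --- is the transition ``$L$ nonnegative SOCP-convex $\Rightarrow$ $L$ is SDSOS,'' which is exactly what Proposition \ref{prop:SOCP-convex} is designed to handle; once that is available, the rest of the argument is bookkeeping (weak duality, coefficient matching through Proposition \ref{prop:1-SDP}, and specializing the $k=1$ level of the hierarchy to recover the Lagrangian-type certificate). The potential pitfall to double-check is that SOCP-convexity is preserved under subtraction of a constant, which is immediate since $h_{L} = h_{f-\sum_i \lambda_i^* g_i}$, and that $\deg L \le d$ so $\sigma$ indeed lies in ${\bf SDSOS}_{n,d}$.
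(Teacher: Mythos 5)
Your proposal is correct and follows essentially the same route as the paper's proof: both define the shifted Lagrangian $f-\sum_i\lambda_i^*g_i-f(x^*)$, use the KKT conditions plus convexity to show it is nonnegative, invoke Proposition \ref{prop:SOCP-convex} to upgrade nonnegativity to an SDSOS certificate, and obtain the matrix reformulation from Proposition \ref{prop:1-SDP}. Your explicit choice $c_{e_i,0}=M\lambda_i^*$ for the one-step convergence claim is a detail the paper leaves implicit, but it matches the argument used for the analogous claim elsewhere in the paper.
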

\begin{proof}
We first observe that
$\min(P)  \ge   \sup_{\mu\in \mathbb{R}, \lambda_i \ge 0}\{\mu: f-\sum_{i=1}^m\lambda_i g_i-\mu \ge 0\}$
always holds. To see the reverse inequality, define
$h(x):=f(x)-\sum_{i=1}^m\lambda_i^* g_i(x)-f(x^*)$.
Then, $h$ is an SOCP-convex polynomial and so, it is convex.  Since $\lambda^*=(\lambda_1^*,\ldots,\lambda_m^*) \in \mathbb{R}^m_+$ is  a KKT multiplier associated with $x^*$, we have $$\nabla h(x^*)=\nabla f(x^*)-\sum_{i=1}^m\lambda_i^* \nabla g_i(x^*)=0 \mbox{ and } \lambda_i^* g_i(x^*)=0.$$ This implies that
$h$ attains minimum at $x^*$, and so, $h(x) \ge h(x^*)=0$ for all $x \in \mathbb{R}^n$.
So,  $\min(P)=f(x^*) \le \max_{\mu\in \mathbb{R}, \lambda_i \ge 0}\{\mu: f-\sum_{i=1}^m\lambda_i g_i-\mu \ge 0\}$. Therefore, we have
\begin{align}\label{au}
\min(P)  =  \max_{\mu\in \mathbb{R}, \lambda_i \ge 0}\{\mu: f-\sum_{i=1}^m\lambda_i g_i-\mu \ge 0\},
\end{align}
and the maximum is attained at $\mu=f(x^*)$ and $\lambda_i=\lambda_i^*$, $i=1,\ldots,m$. Note here that it is sufficient to  consider $ \mu\in\mathbb{R}, \lambda_i \ge 0$ in \eqref{au} such that $f-\sum_{i=1}^m\lambda_i g_i-\mu$ is SOCP-convex polynomial with degree $d$.

Now, fix $\lambda_i \ge 0$ and $\mu \in \mathbb{R}$ such that
$f-\sum_{i=1}^m\lambda_i g_i-\mu$ is a nonnegative SOCP-convex polynomial with degree $d$. Then, by Proposition \ref{prop:SOCP-convex},
$f-\sum_{i=1}^m\lambda_i g_i-\mu$ is an  SDSOS polynomial with degree $d$. Thus,
\[
\min(P) =  \max_{\substack{\mu\in \mathbb{R}, \lambda_i \ge 0, \\
\sigma\in  {\bf SDSOS}_{n,d}}}\{\mu\mid f-\sum_{i=1}^m\lambda_i g_i-\mu =\sigma\}.
\]
The second equality of \eqref{eq:99} holds by the SOCP reformulation of an SDSOS polynomial as given in Proposition \ref{prop:1-SDP}.
\end{proof}

\begin{remark}{(\bf SDSOS Representation Condition and One-step Convergence).}
Define an SDSOS representation condition (C) for a polynomial $p$ as follows: for any $\mu \in \mathbb{R}$
\[
(C) \ \ \ \ \ \  p \ge \mu \ \Rightarrow \ p-\mu \mbox{ is SDSOS}.
\]
A close inspection of the proof of Theorem \ref{th:9} shows that the conclusion of Theorem \ref{th:9} continues to hold if
 $h:=f-\sum_{i=1}^m \lambda_i^* g_i$ is a convex polynomial which satisfies the above condition (C),
where $\lambda_i^*$ are KKT multipliers of problem (P).
Moreover, we note that this abstract condition can also be satisfied for a convex polynomial which is not SOCP-convex. For example,
 $f(x_1,x_2)=x_1^6+x_1^4+x_1^2-2x_1x_2+x_2^2$ is a convex polynomial satisfies condition (C) and it is not SOCP-convex.
\end{remark}

\subsection{Polynomial Optimization with Essentially Nonpositive Coefficients}
Let us now present another class of polynomials, called, polynomials with essentially nonpositive coefficients, that admits one-step convergence.

We first recall the definition of
polynomials with essentially non-positive coefficients introduced in \cite{HLQ}.

\begin{definition}
Let $f$ be a polynomial on $\mathbb{R}^n$ with degree $d$. Let $r=f(0)$ be the constant term of $f$ and let $f_{d,i}$ be the coefficient associated with $x_i^{d}$. Then, $f$ can be written as
\[
f(x)=\sum_{i=1}^n f_{d,i} x_i^{d}+\sum_{{\bf p} \in \Omega_f}f_{\bf p}x_1^{p_1}\cdots x_n^{p_n}+r.
\]
where  \begin{equation}\label{eq:Omega_f}\Omega_f=\{{\bf p} \in {(\mathbb{N}_0)^n} : f_{{\bf p}} \neq 0 \mbox{ and } 0<\max_{1 \le i \le n}p_i <d\}. \end{equation}
 We say $f$ has essentially nonpositive coefficients if $f_{\bf p} \le 0$ for all ${\bf p} \in \Omega_f$.
\end{definition}

For a real polynomial $f$,  define a new polynomial $\hat{f}$, given by
\[
\hat{f}(x)=\sum_{i=1}^n f_{d,i} \, x_i^{d}-\sum_{{\bf p} \in \Delta_f}|f_{\bf p}|x_1^{p_1} \cdots x_n^{p_n},
\]
where $\Delta_f:=\{{\bf p}  \in \Omega_f: f_{\bf p} < 0 \mbox{ or } {\bf p} \notin (2\mathbb{N} \cup \{0\})^n\}$.
We now recall the following useful lemma, which provides a test for SDSOS property of a homogeneous polynomial $f$ in terms of the nonnegativity of a new function $\hat{f}$.
\begin{lemma} {\rm (\cite[Corollary 2.8]{Fiaco})} \label{lemma:2.1}
Let $f$ be a homogeneous polynomial of degree
$d$ where $d$ is an even number. If $\hat{f}$ is a nonnegative polynomial, then $f$ is an SDSOS polynomial (or sums-of-binomial-squares polynomial).
\end{lemma}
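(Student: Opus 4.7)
My plan is to reduce to showing that ``basic AM--GM agiforms'' of the shape $\sum_i \lambda_i x_i^d - c\,x^{\mathbf{p}}$ are SDSOS whenever they are nonnegative, and then to write $f$ as a sum of such agiforms together with manifest monomial squares, the decomposition being produced from a SAGE-type analysis of $\hat f$.

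First I would peel off the obvious squares. Since $\hat f(e_i)=f_{d,i}$, nonnegativity of $\hat f$ forces $f_{d,i}\ge 0$ for every $i$. Split $f=f_{\mathrm{sq}}+f_{\mathrm{core}}$ with $f_{\mathrm{sq}}:=\sum_{\mathbf{p}\in\Omega_f\setminus\Delta_f}f_{\mathbf{p}}\,x^{\mathbf{p}}$ and $f_{\mathrm{core}}:=\sum_{i=1}^n f_{d,i}\,x_i^d+\sum_{\mathbf{p}\in\Delta_f}f_{\mathbf{p}}\,x^{\mathbf{p}}$. Every term of $f_{\mathrm{sq}}$ has the form $f_{\mathbf{p}}\,(x^{\mathbf{p}/2})^2$ with $f_{\mathbf{p}}>0$, so $f_{\mathrm{sq}}$ is already a sum of monomial squares and hence SDSOS, while $\hat f$ depends only on $f_{\mathrm{core}}$.

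Next, using $\hat f\ge 0$ I would allocate the pure-power ``budget'' among the bad monomials: produce nonnegative weights $\lambda_{\mathbf{p},i}$ and $\mu_i$ with $\mu_i+\sum_{\mathbf{p}\in\Delta_f}\lambda_{\mathbf{p},i}=f_{d,i}$ such that each single-bad-term polynomial
\[
A_{\mathbf{p}}(x):=\sum_{i=1}^n\lambda_{\mathbf{p},i}\,x_i^d+f_{\mathbf{p}}\,x^{\mathbf{p}},\qquad \mathbf{p}\in\Delta_f,
\]
is nonnegative on $\mathbb{R}^n$. Because the positive support of $\hat f$ sits on the vertices $d e_1,\ldots,d e_n$ of the Newton simplex, and every $\mathbf{p}\in\Delta_f$ lies in their convex hull, this is exactly the setting in which the SAGE/circuit theorem guarantees that nonnegativity of $\hat f$ is equivalent to the existence of such an AM--GM decomposition; the weights can be found by a linear program in logarithmic coordinates.

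Finally, I would show that each $A_{\mathbf{p}}$ is itself SDSOS, which is the main obstacle. After a sign flip $x_i\mapsto -x_i$ on some index with $p_i$ odd (when $\mathbf{p}\notin(2\mathbb{N}_0)^n$), or trivially otherwise, it suffices to show that $\sum_i\lambda_i x_i^d-c\,x^{\mathbf{p}}$ is a sum of binomial squares whenever $c\le\prod_i(d\lambda_i/p_i)^{p_i/d}$. I would prove this by induction on $|\operatorname{supp}(\mathbf{p})|$: the base case $|\operatorname{supp}(\mathbf{p})|=1$ collapses to the trivial $(\lambda_i-c)x_i^d$; for $|\operatorname{supp}(\mathbf{p})|\ge 2$, pick $i\ne j$ with $p_i,p_j>0$ and subtract off a binomial square $(\sqrt{a}\,x_i^{\alpha_i}x_j^{\alpha_j}\pm\sqrt{b}\,x_i^{\beta_i}x_j^{\beta_j})^2$ whose cross term cancels a portion of $c\,x^{\mathbf{p}}$, leaving behind auxiliary monomial-square terms that the residual $\lambda_i x_i^d+\lambda_j x_j^d$ budget can absorb by a smaller AM--GM estimate. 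The hard part is exactly this bookkeeping: choosing the chain of binomial squares so that $c\,x^{\mathbf{p}}$ is completely consumed while no $x_i^d$ coefficient is ever overdrawn, which is the technical heart of~\cite{Fiaco}. Assembling the three steps, $f=f_{\mathrm{sq}}+\sum_i\mu_i\, x_i^d+\sum_{\mathbf{p}\in\Delta_f}A_{\mathbf{p}}$ is manifestly a sum of SDSOS polynomials, hence SDSOS.
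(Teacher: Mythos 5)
The paper offers no proof of this lemma: it is quoted verbatim as Corollary~2.8 of Fidalgo and Kovacec \cite{Fiaco}, so there is no internal argument to compare yours against. Judged as a standalone proof, your proposal is a plausible outline of the strategy behind \cite{Fiaco} (recast in SAGE/agiform language), and the bookkeeping around it is sound: peeling off the monomial squares indexed by $\Omega_f\setminus\Delta_f$, reducing $\hat f\ge 0$ on $\mathbb{R}^n$ to the nonnegative orthant, the sign flips for exponents with an odd entry, and the reassembly $f=f_{\mathrm{sq}}+\sum_i\mu_i x_i^d+\sum_{\mathbf p}A_{\mathbf p}$ all check out. But there is a genuine gap at the decisive step: the claim that every nonnegative form $\sum_i\lambda_i x_i^d-c\,x^{\mathbf p}$ (equivalently, one satisfying $c\le\prod_i(d\lambda_i/p_i)^{p_i/d}$) is a sum of binomial squares is essentially the entire content of the lemma, and you do not prove it. You sketch an induction on $|\operatorname{supp}(\mathbf p)|$ and then concede that ``the hard part is exactly this bookkeeping,'' deferring it back to \cite{Fiaco}. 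Since the surrounding reductions are routine, what remains unproved is not a detail but the theorem itself.

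A secondary issue is the allocation step: the assertion that nonnegativity of $\hat f$ produces weights $\lambda_{\mathbf p,i}\ge 0$ with $\sum_{\mathbf p}\lambda_{\mathbf p,i}\le f_{d,i}$ making each single-tail-term piece $A_{\mathbf p}$ nonnegative is itself a nontrivial completeness theorem for AM--GM (SAGE) decompositions, valid here because the positive support of $\hat f$ sits on the vertices $de_1,\dots,de_n$ of the Newton simplex, but stated by you without proof and not contained in the lemma being established. Taken together, your argument reduces the corollary to two external results, one of which is the corollary's own technical core. That is a fair description of how the literature is organized, but it does not constitute a proof; to close the gap you would need to either carry out the binomial-squares construction for a single nonnegative agiform in full, or cite it explicitly as the input you are relying on --- in which case the honest conclusion is that the lemma is being quoted, exactly as the paper does.
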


\begin{proposition}\label{th:2} {\bf (SDSOS representation for nonnegative polynomials with essentially nonpositive coefficients)}
Let $f$ be a polynomial with essentially nonpositive coefficients. If $f$ is a nonnegative polynomial, then $f$ is an SDSOS polynomial.
\end{proposition}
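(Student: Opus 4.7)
The plan is to reduce the claim to the homogeneous case and then invoke Lemma~\ref{lemma:2.1}. Since $f$ is nonnegative with essentially nonpositive coefficients, one first observes that the degree $d$ must be even (examining the behaviour of $f$ along the rays $x=te_i$ as $t\to\pm\infty$), that each leading coefficient $f_{d,i}\ge 0$, and that the constant term $r=f(0)\ge 0$.

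The key step is to homogenize: introduce an auxiliary variable $x_{n+1}$ and define the degree-$d$ homogeneous polynomial
\[
\tilde f(x,x_{n+1}) \;:=\; x_{n+1}^d\, f\!\bigl(x_1/x_{n+1},\ldots,x_n/x_{n+1}\bigr) \;=\; \sum_{|p|\le d} f_p\, x^p\, x_{n+1}^{d-|p|}
\]
on $\R^{n+1}$. Because $d$ is even, $x_{n+1}^d\ge 0$, so the nonnegativity of $f$ lifts to nonnegativity of $\tilde f$ on $\{x_{n+1}\ne 0\}$ and then to all of $\R^{n+1}$ by continuity. Moreover, $\tilde f$ inherits the essentially nonpositive coefficient structure: its pure monomials are $x_i^d$ for $1\le i\le n$ (with coefficients $f_{d,i}\ge 0$) and $x_{n+1}^d$ (with coefficient $r\ge 0$), while every remaining monomial $x^p x_{n+1}^{d-|p|}$ of $\tilde f$ has its index in $\Omega_{\tilde f}$ and its coefficient equals some $f_p$ with $p\in\Omega_f$, hence $f_p\le 0$. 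This includes the degree-$d$ non-pure monomials of $f$ (with $|p|=d$ but $p\ne d e_i$), for which $\max p_i\le d-1$ so that $p\in\Omega_f$.

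Once $\tilde f$ is shown to be nonnegative and essentially nonpositive, a direct inspection of the definition of $\hat{\,\cdot\,}$ yields $\hat{\tilde f}\equiv \tilde f$: for each $\tilde p\in\Omega_{\tilde f}$ the coefficient $\tilde f_{\tilde p}$ is strictly negative (hence $\tilde p\in\Delta_{\tilde f}$) and $-|\tilde f_{\tilde p}|=\tilde f_{\tilde p}$. Thus $\hat{\tilde f}=\tilde f\ge 0$ on $\R^{n+1}$, and Lemma~\ref{lemma:2.1} produces an SDSOS (binomial-square) decomposition of $\tilde f$. Substituting $x_{n+1}=1$ turns each term $(\alpha\, m_i(z)\pm\beta\, m_j(z))^2$ into a binomial square in the $x$ variables of total degree at most $d$, yielding an SDSOS representation of $f(x)=\tilde f(x,1)$.

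The main obstacle is the combinatorial bookkeeping in the second paragraph, in particular making sure the degree-$d$ non-pure monomials of $f$ (which appear in $\tilde f$ with $x_{n+1}^0$) are correctly placed in $\Omega_{\tilde f}$ rather than mistaken for pure monomials; once this classification is settled, the identity $\hat{\tilde f}=\tilde f$ and the dehomogenization step are both routine.
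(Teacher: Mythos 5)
Your proposal is correct and follows essentially the same route as the paper's proof: homogenize $f$, observe that the essentially nonpositive coefficient structure forces $\Delta_{\tilde f}=\Omega_{\tilde f}$ and hence $\hat{\tilde f}=\tilde f\ge 0$, apply Lemma~\ref{lemma:2.1} to get an SDSOS decomposition of the homogenization, and dehomogenize by setting the auxiliary variable to $1$. The only cosmetic difference is that the paper first treats the homogeneous case separately and then reduces the general case to it, whereas you carry out the verification directly on $\tilde f$; the extra bookkeeping you do for the degree-$d$ non-pure monomials is a welcome clarification but not a new idea.
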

\begin{proof}
We first consider the case where $f$ is a homogeneous polynomial.
Note that $f$ is a nonnegative polynomial with essentially nonpositive coefficients. We first observe that
$\Delta_f=\Omega_f$ in this case. Moreover, for all ${\bf p} \in \Delta_f (=\Omega_f)$, $f_{{\bf p}} \le 0$. Then, one has
\begin{eqnarray*}
\hat{f}(x)= \sum_{i=1}^n f_{d,i} \, x_i^{d}-\sum_{{\bf p} \in \Delta_f}|f_{{\bf p}}|x_1^{p_1} \cdots x_n^{p_n}
&= & \sum_{i=1}^n f_{d,i} \, x_i^{d}+\sum_{{\bf p} \in \Delta_f}f_{{\bf p}}x_1^{p_1} \cdots x_n^{p_n} \\
&= & \sum_{i=1}^n f_{d,i} \, x_i^{d}+\sum_{{\bf p} \in \Omega_f}f_{{\bf p}}x_1^{p_1} \cdots x_n^{p_n} \\
& = & f(x).
\end{eqnarray*}
So, $\hat{f}$ is a nonnegative polynomial, and hence, Lemma \ref{lemma:2.1} implies that $f$ is an SDSOS polynomial.

In the more general case that $f$ is a nonhomogeneous polynomial, we can decompose it as
\[
f(x)=\sum_{i=1}^n f_{d,i} x_i^{d}+\sum_{{\bf p} \in \Omega_f \backslash \{0\}}f_{{\bf p}}\, x_1^{p_1} \cdots x_n^{p_n}+r,
\]
where $r=f(0)$ and $f_{{\bf p}} \le 0$ for all ${\bf p} \in \Omega_f \backslash \{0\}$.
Its canonical homogenization can be written as
\[
\tilde{f}(x,t)=\sum_{i=1}^n f_{d,i} x_i^{d}+\sum_{{\bf p} \in \Omega_f}f_{{\bf p}}\, x_1^{p_1} \cdots x_n^{p_n}t^{m-|{\bf p}|}+r\, t^d \mbox{ for all } (x^T,t)^T \in \mathbb{R}^{n+1}.
\]
As $f$ is nonnegative polynomial with essentially nonpositive coefficients, $\tilde{f}$ is also a nonnegative polynomial with essentially nonpositive coefficients. Then, the homogeneous cases implies that
$\tilde{f}$ is an SDSOS polynomial. Therefore, $f(x)=\tilde{f}(x,1)$ is also an SDSOS polynomial.
\end{proof}

 As a simple consequence of Proposition \ref{th:2}, we now obtain the exact SOCP relaxation result for nonconvex polynomial optimization with essentially nonpositive coefficients,
 and so, one-step convergence of the
bounded degree SOCP hierarchy holds. In the
 quadratic cases (that is,
 $d=2$), this result was obtained in \cite{kim} (for a related result see also \cite{jll}).
  \begin{theorem} {\bf (Exact SOCP relaxation for polynomial programs with essentially nonpositive coefficients)}
   Let $f$ and $-g_i$ $i=1,\ldots,m$, be polynomials on $\mathbb{R}^n$ with essentially nonpositive coefficients and an even degree $d$.
   For problem (P), suppose that the
  strict feasibility condition holds, i.e., there exists $x_0 \in \mathbb{R}^n$ such that $g_i(x_0)>0$ for all $l=1,\ldots,p$.
 Then, we have
\begin{eqnarray*}
{\rm val}(P) &= & \max_{\substack{\mu\in \mathbb{R}, \lambda_i \ge 0 \\
\sigma \in {\bf SDSOS}_{n,d}}}\{\mu \mid f-\sum_{i=1}^m\lambda_i g_i-\mu = \sigma\}.
                                                                                  \end{eqnarray*}
                                                                                  In particular, one step convergence holds for the bounded degree SOCP hierarchy, that is, ${\rm val}(P)={\rm val}(SOCP_1^{d})$, whenever Assumption A holds.
 \end{theorem}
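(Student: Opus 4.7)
The plan is to reduce the claimed equality to producing a Lagrange multiplier $\lambda^{*}\ge 0$ such that $f-\sum_{i}\lambda_{i}^{*}g_{i}-{\rm val}(P)$ is pointwise nonnegative on $\R^{n}$, and then to upgrade this nonnegative polynomial to an SDSOS one via Proposition~\ref{th:2}. Weak duality (the $\le$ direction) is immediate: for any feasible triple $(\mu,\lambda,\sigma)$ and any $x\in K$, the identity $f(x)=\mu+\sum_{i}\lambda_{i}g_{i}(x)+\sigma(x)$ yields $f(x)\ge\mu$, because $\sigma$ is SDSOS and therefore nonnegative.

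For the reverse inequality, I first exploit the sign structure to reduce everything to the nonnegative orthant. Since $d$ is even, $x^{\mathbf{p}}\le|x|^{\mathbf{p}}$ combined with $f_{\mathbf{p}}\le 0$ for $\mathbf{p}\in\Omega_{f}$ gives $f(x)\ge f(|x|)$, and analogously $g_{i}(|x|)\ge g_{i}(x)$ since $-g_{i}$ has essentially nonpositive coefficients. Hence the primal optimum can be sought over $\R_{+}^{n}$, and for each $\lambda\ge 0$ the Lagrangian $L(\cdot,\lambda)=f-\sum_{i}\lambda_{i}g_{i}$ (itself having essentially nonpositive coefficients) satisfies $\inf_{\R^{n}}L(\cdot,\lambda)=\inf_{\R_{+}^{n}}L(\cdot,\lambda)$. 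Under strict feasibility, I would then invoke a Lagrangian duality argument on the reduced problem $(P_{+}):=\inf\{f(x):g_{i}(x)\ge 0,\,x\ge 0\}$ to produce $\lambda_{i}^{*}\ge 0$ with $f(x)-\sum_{i}\lambda_{i}^{*}g_{i}(x)\ge{\rm val}(P)$ for all $x\in\R_{+}^{n}$. Extending this pointwise bound to all of $\R^{n}$ is immediate since $f-\sum_{i}\lambda_{i}^{*}g_{i}$ again has essentially nonpositive coefficients, giving $f(x)-\sum_{i}\lambda_{i}^{*}g_{i}(x)\ge f(|x|)-\sum_{i}\lambda_{i}^{*}g_{i}(|x|)\ge{\rm val}(P)$. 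The polynomial $h:=f-\sum_{i}\lambda_{i}^{*}g_{i}-{\rm val}(P)$ is then a nonnegative polynomial of even degree $d$ with essentially nonpositive coefficients (subtracting the constant ${\rm val}(P)$ leaves $\Omega_{h}$ unchanged), so Proposition~\ref{th:2} yields $h\in\mathbf{SDSOS}_{n,d}$, and the triple $({\rm val}(P),\lambda^{*},h)$ certifies the equality.

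The one-step convergence ${\rm val}({\rm SOCP}_{1}^{d})={\rm val}(P)$ then drops out: any feasible triple of the displayed max embeds into $({\rm SOCP}_{1}^{d})$ by choosing $c_{e_{i},0}=M\lambda_{i}$ and all other $c_{\mathbf{p},\mathbf{q}}=0$, so ${\rm val}(P)\le{\rm val}({\rm SOCP}_{1}^{d})$, while the reverse inequality is supplied by Corollary~\ref{thm:1-SOCP}(i). The main obstacle I anticipate is the nonconvexity of $(P_{+})$: ordinary Slater-based strong duality requires convexity, which is not assumed here. The plan to bypass this is to exploit that the sign structure supplies a form of hidden convexity on the nonnegative orthant---the key fact that any nonnegative polynomial with essentially nonpositive coefficients is minimized at a point in $\R_{+}^{n}$ and admits an SDSOS certificate---which, together with strict feasibility, should allow one to extract the multipliers either by a direct KKT argument at a nonnegative-orthant global minimizer or by passing to a copositive/completely-positive reformulation analogous to the quadratic case treated in \cite{kim}.
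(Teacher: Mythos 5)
Your weak-duality direction, your reduction to the nonnegative orthant, the upgrade of a nonnegative polynomial with essentially nonpositive coefficients to an SDSOS one via Proposition~\ref{th:2}, and the embedding of the Lagrangian-type dual into $({\rm SOCP}_1^d)$ via $c_{e_i,0}=M\lambda_i$ are all correct and consistent with the paper's argument. The genuine gap is the step you yourself flag as the ``main obstacle'': producing multipliers $\lambda^*\ge 0$ with $f-\sum_i\lambda_i^* g_i-{\rm val}(P)\ge 0$ \emph{globally} on $\R^n$. This is a zero-gap Lagrangian duality statement for a nonconvex problem, and none of the three routes you sketch closes it: a KKT argument at a global minimizer only gives stationarity of the Lagrangian, not a global lower bound, because the Lagrangian itself is nonconvex; Slater-based separation is unavailable without convexity; and the copositive reformulation of \cite{kim} is specific to the quadratic case, with no argument that it extends to general even degree $d$. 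The reduction of the problem to $\R^n_+$ does not by itself manufacture the multipliers.

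The paper does not prove this step from first principles either: it imports it wholesale from \cite{HLQ}, where it is shown (via a tensor analogue of Yuan's alternative theorem) that under the essentially-nonpositive-coefficients and strict feasibility hypotheses the exact sums-of-squares relaxation ${\rm val}(P)=\max\{\mu\mid f-\sum_i\lambda_i g_i-\mu\in\Sigma^2_{n,d},\ \lambda_i\ge 0\}$ holds. Once that is granted, the rest is exactly your argument: any such certificate $f-\sum_i\lambda_i g_i-\mu$ is a nonnegative polynomial with essentially nonpositive coefficients, hence SDSOS by Proposition~\ref{th:2}, and one-step convergence follows by combining ${\rm val}({\rm SOCP}_1^d)\ge{\rm val}(P)$ with the monotonicity and convergence from Theorem~\ref{thm:1}. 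So your proposal is incomplete precisely at the one point where an external result is indispensable; to repair it, either cite the exact relaxation result of \cite{HLQ} or supply an actual proof of the hidden-convexity/alternative theorem on which it rests.
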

 \begin{proof}
 As $f$ and $-g_i$ $i=1,\ldots,m$, be polynomials on $\mathbb{R}^n$ with essentially nonpositive coefficients and an even degree $d$, and strict feasibility condition holds,
 it has been shown in \cite{HLQ} that exact sums-of-squares relaxation holds, that is,
\begin{eqnarray} \label{eq:lemma_2}
{\rm val}(P) =\max_{\substack{\mu\in \mathbb{R}, \lambda_i \ge 0 \\
\sigma \in \Sigma^2_{n,d}}}\{\mu\mid f-\sum_{i=1}^m\lambda_i g_i-\mu = \sigma\}.
\end{eqnarray}
Let $\lambda_i \ge 0$ and $\mu \in \mathbb{R}$ be such that $f-\sum_{i=1}^m\lambda_i g_i-\mu$ is  sums-of-squares. Then, by our assumption, we see that
   $f-\sum_{i=1}^m\lambda_i g_i-\mu$ is a nonnegative polynomial with essentially nonpositive coefficients. Theorem \ref{th:2} then implies that
   $f-\sum_{i=1}^m\lambda_i g_i-\mu$ is an SDSOS polynomial. So,
   \eqref{eq:lemma_2} gives us that
\begin{eqnarray*}
{\rm val}(P) &= & \max_{\substack{\mu\in \mathbb{R}, \lambda_i \ge 0 \\
\sigma \in {\bf SDSOS}_{n,d}}}\{\mu\mid f-\sum_{i=1}^m\lambda_i g_i-\mu = \sigma\}.
                                                                                  \end{eqnarray*}

Finally, if Assumption A holds, then Theorem \ref{thm:1} implies that ${\rm val} ({\rm SOCP}_k^d) \le {\rm val} ({\rm SOCP}_{k+1}^d) \le {\rm val}(P)$ for all $k$, and
$\lim_{k \rightarrow \infty} {\rm val} ({\rm SOCP}_k^d) ={\rm val}(P)$.
Observe that $$
{\rm val}({\rm SOCP}_1^d) \ge \max_{\substack{\mu\in \mathbb{R}, \lambda_i \ge 0 \\
\sigma \in {\bf SDSOS}_{n,d}}}\{\mu\mid f-\sum_{i=1}^m\lambda_i g_i-\mu = \sigma\}={\rm val}(P).
$$
This implies that ${\rm val}({\rm SOCP}_1^d)={\rm val}(P)$.
 \end{proof}

\begin{example}
{\rm Consider the following nonconvex polynomial optimization problem with degree $10$:
\begin{eqnarray*}
(EP2) & \displaystyle \min_{x_1,x_2,x_3,x_4 \in \mathbb{R}} & \sum_{i=1}^4x_i^{10}-10 x_1x_2x_3x_4 \\
& \mbox{ s.t. } & 1-\sum_{i=1}^4x_i^{10} \ge 0, \\
& & 0 \le x_i \le 1, \ i=1,2,3,4.
\end{eqnarray*}
By solving the KKT condition, it can be directly verified that the global minimum is optimal value $1-10(\sqrt[10]{\frac{1}{4}})^4 \approx -4.7435$ and is attained at $(x_1^*,x_2^*,x_3^*,x_4^*)=(\sqrt[10]{\frac{1}{4}},\sqrt[10]{\frac{1}{4}},\sqrt[10]{\frac{1}{4}},\sqrt[10]{\frac{1}{4}})
$. 

Clearly, Assumption A holds for problem (EP2). Note that the degree of the functions involved in this polynomial
optimization problem is $d=4$. Let $r=4$ and consider the first relaxation problem $(SOCP^4_1)$ in the bounded degree
hierarchy problem $(SOCP^{4}_k)$.  We first convert this problem into a second-order cone programming problem using the polynomial optimization toolbox
SPOT \cite{spot}. Solving the corresponding second-order cone programming problem via the popular conic
 program solver MOSEK, we obtain the optimal value $-4.7435$ (in $43.73$ seconds) which coincides with the true optimal value.

  Moreover,
one can also solve this problem via the recently established bounded degree sums-of-squares hierarchy \cite{Bounded_SOS}. Indeed, using the
  the polynomial optimization toolbox
SPOT \cite{spot} to convert the first relaxation problem in the bounded sums-of-squares hierarchy into a semi-definite programming problem, and then solve it via  MOSEK. We also obtain the true optimal value $-4.7435$ but use more time ($2934.75$
seconds) in accomplishing the task.}
\end{example}

\section{Retrieving Solutions from  Exact SOCP Relaxation}
In this section, we present a version of Jensen's inequality for SOCP-convex polynomials which is then employed to  extract    solutions of problem~\eqref{P} from its SOCP relaxation.

For any $u \in \mathbb{N}$ and  $y=(y_\alpha)\in \mathbb{R}^{s(u,n)},$ recall that $L_y(f)$ is the Riesz functional defined as in \eqref{Ly} and
${\bf M}_{u}(y)$ is the moment matrix with respect to $y=(y_\alpha)\in \mathbb{R}^{s(2u,n)}$ defined as in \eqref{M-2}. We first establish
a Jensen's inequality for SOCP-convex polynomials.

\begin{proposition}{\bf (Jensen's inequality with SOCP-convex polynomials)}\label{prop:Jensen}
Let $f$ be an SOCP-convex polynomial on $\mathbb{R}^n$ with degree $d:=2l$. Let $y=(y_\alpha)\in \mathbb{R}^{s(d,n)}$ with $y_1=1$ and  \begin{align}\label{1.12}  \|\bigg(\begin{array}{c}    2({\bf M}_l(y))_{ij}\\
                                                      ({\bf M}_l(y))_{ii}-({\bf M}_l(y))_{jj}
                                                                                  \end{array}\bigg)\| \le ({\bf M}_l(y))_{ii}+({\bf M}_l(y))_{jj}, \; 1 \le i,j \le s(l,n).\end{align} Then, we have $$L_y(f)\ge f\big(L_y(x_1),\ldots, L_y(x_n)\big),$$  where $L_y$ is given as in \eqref{Ly} and  $x_i$ denotes the polynomial which maps a vector $x$ in $\mathbb{R}^n$ to its $i$th coordinate.

\end{proposition}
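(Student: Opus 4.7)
The plan is to mimic the classical proof of Jensen's inequality for SOS-convex polynomials (where one uses $\mathbf{M}_{l}(y)\succeq 0$), but replacing the PSD cone by the 2-by-2 PSD structure that is captured by the second-order cone constraint (\ref{1.12}). Set $\bar z:=(L_y(x_1),\ldots,L_y(x_n))\in\mathbb{R}^n$. Since $f$ is SOCP-convex, the polynomial
\[
h_f(\cdot,\bar z)\;=\;f(\cdot)-f(\bar z)-\nabla f(\bar z)^{T}(\cdot-\bar z)
\]
is SDSOS in the variable $x$ of degree at most $d=2l$. The goal reduces to showing two things: (a) applying $L_y$ to $h_f(\cdot,\bar z)$ yields exactly $L_y(f)-f(\bar z)$, and (b) $L_y(\sigma)\ge 0$ for every SDSOS polynomial $\sigma$ of degree $\le 2l$. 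Combining (a) and (b) gives $L_y(f)\ge f(\bar z)$, which is the desired inequality.

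Step (a) is direct by linearity of $L_y$, together with the normalization $y_1=L_y(1)=1$ and the identity $L_y(x_i)=\bar z_i$ by definition of $\bar z$. The constant $f(\bar z)$ is evaluated to $f(\bar z)\cdot y_1=f(\bar z)$, and the linear term $\nabla f(\bar z)^{T}(x-\bar z)$ evaluates to $\nabla f(\bar z)^{T}(\bar z-\bar z)=0$. Hence $L_y(h_f(\cdot,\bar z))=L_y(f)-f(\bar z)$, which is just bookkeeping.

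The main technical point is Step (b). I would use Proposition~\ref{prop:1-SDP} with $n_1=0, n_2=n$ to write $\sigma(x)=(x^{(l)})^{T}Q\, x^{(l)}$ where $Q=\sum_{1\le i,j\le s(l,n)}M^{ij}$ and each $M^{ij}$ is supported only on the four entries $(i,i),(j,j),(i,j),(j,i)$ and satisfies the second-order cone condition
\[
\Big\|\bigl(\,2M^{ij}_{ij},\;M^{ij}_{ii}-M^{ij}_{jj}\,\bigr)\Big\|\le M^{ij}_{ii}+M^{ij}_{jj},
\]
which is equivalent to saying the $2\times 2$ principal submatrix of $M^{ij}$ indexed by $\{i,j\}$ is positive semidefinite. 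From the definition (\ref{M-2})--(\ref{M-1}) of $\mathbf{M}_l(y)$ one has $L_y(\sigma)=\langle Q,\mathbf{M}_l(y)\rangle=\sum_{i,j}\langle M^{ij},\mathbf{M}_l(y)\rangle$, and each summand equals the trace inner product of the $2\times 2$ PSD block $M^{ij}|_{\{i,j\}}$ with the corresponding $2\times 2$ principal submatrix of $\mathbf{M}_l(y)$. By squaring both sides, the hypothesis (\ref{1.12}) is exactly the statement that every such $2\times 2$ principal submatrix of $\mathbf{M}_l(y)$ is positive semidefinite (nonnegative diagonal entries and nonnegative $2\times 2$ minors). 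Since the trace inner product of two $2\times 2$ PSD matrices is nonnegative, each summand, and hence $L_y(\sigma)$, is nonnegative. The delicate part is this equivalence between (\ref{1.12}) and $2\times 2$ PSD-ness, but it reduces to an elementary algebraic manipulation once the nonnegativity of $M^{ij}_{ii}+M^{ij}_{jj}$ is observed.
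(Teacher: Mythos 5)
Your proposal is correct and follows essentially the same route as the paper: reduce to showing that $L_y$ is nonnegative on SDSOS polynomials of degree at most $2l$, using the first-order expansion of $f$ at $\bar z=L_y(x)$ so that the gradient term vanishes, and then exploit that condition \eqref{1.12} makes every $2\times 2$ principal submatrix of ${\bf M}_l(y)$ positive semidefinite. The only (cosmetic) difference is that the paper carries out the nonnegativity step by expanding the binomial-squares decomposition of the SDSOS certificate term by term and bounding $({\bf M}_l(y))_{ij}$ via $\sqrt{({\bf M}_l(y))_{ii}({\bf M}_l(y))_{jj}}$, whereas you phrase the same computation as $L_y(\sigma)=\langle Q,{\bf M}_l(y)\rangle=\sum_{i,j}\langle M^{ij},{\bf M}_l(y)\rangle$ with each summand a trace inner product of two $2\times2$ PSD blocks.
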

\begin{proof} Fix $\bar z\in\mathbb{R}^n$ but arbitrary and consider the following expansion \begin{align}\label{1.11}f(x)-f(\bar z)=\nabla f(\bar z)^T(x-\bar z) +g(x,\bar z),\end{align} where $g(x,\bar z)$ is  a polynomial.  Applying $L_y$ on both sides in \eqref{1.11}, we have \begin{align*}\label{}L_y(f)-f(\bar z)=\nabla f(\bar z)^T\big(L_y(x)-\bar z\big) +L_y\big(g(\cdot,\bar z)\big),\end{align*} where we should recall that $y_1=1.$  Since $f$ is an SOCP-convex polynomial on $\mathbb{R}^n$, $g(\cdot,\bar z)$   is an SDSOS polynomial on $\mathbb{R}^n$, too. Then,  there exist ${\alpha}_i,{\beta}_{ij}^+,{\beta}_{ij}^-,{\gamma}_{ij}^+,{\gamma}_{ij}^- \ge 0$ such that
\begin{eqnarray*}
g(x,\bar z) =\sum_{i=1}^{s(l,n)} \alpha_i m_i^2(x)+\sum_{i,j=1, i \neq j}^{s(l,n)} (\beta_{ij}^+ m_i(x)+\gamma_{ij}^+ m_j(x))^2+\sum_{i,j=1, i \neq j}^{s(l,n)} (\beta_{ij}^- m_i(x)-\gamma_{ij}^- m_j(x))^2.
\end{eqnarray*}
where $m_i(x), m_j(x)$, $1 \le i,j \le s(l,n)$, are monomials of $x$ with degree up to $l$. Hence, \begin{align*}
L_y\big(g(\cdot,\bar z)\big) =&L_y\big(\sum_{i=1}^{s(l,n)} \alpha_i m_i^2(x)\big)+L_y\big(\sum_{i,j=1, i \neq j}^{s(l,n)} (\beta_{ij}^+ m_i(x)+\gamma_{ij}^+ m_j(x))^2\big)\\&+L_y\big(\sum_{i,j=1, i \neq j}^{s(l,n)} (\beta_{ij}^- m_i(x)-\gamma_{ij}^- m_j(x))^2\big)\\&=\sum_{i=1}^{s(l,n)} \alpha_i ({\bf M}_l(y))_{ii}+\sum_{i,j=1, i \neq j}^{s(l,n)} (\beta_{ij}^+)^2 ({\bf M}_l(y))_{ii}+2\sum_{i,j=1, i \neq j}^{s(l,n)} \beta_{ij}^+\gamma_{ij}^+({\bf M}_l(y))_{ij}\\&+\sum_{i,j=1, i \neq j}^{s(l,n)} (\gamma_{ij}^+)^2 ({\bf M}_l(y))_{jj}+\sum_{i,j=1, i \neq j}^{s(l,n)} (\beta_{ij}^-)^2 ({\bf M}_l(y))_{ii}-2\sum_{i,j=1, i \neq j}^{s(l,n)} \beta_{ij}^-\gamma_{ij}^-({\bf M}_l(y))_{ij}\\&+\sum_{i,j=1, i \neq j}^{s(l,n)} (\gamma_{ij}^-)^2 ({\bf M}_l(y))_{jj}\ge 0,
\end{align*} where the inequality holds  by observing by \eqref{1.12} that $ ({\bf M}_l(y))_{ii}\ge 0,  ({\bf M}_l(y))_{jj}\ge 0$ and $ \sqrt{({\bf M}_l(y))_{ii} ({\bf M}_l(y))_{jj}}\ge  ({\bf M}_l(y))_{ij},  1 \le i,j \le s(l,n).$ So, we arrive at \begin{align*}\label{}L_y(f)-f(\bar z)\ge \nabla f(\bar z)^T\big(L_y(x)-\bar z\big).\end{align*} Taking $\bar z:=L_y(x)=\big(L_y(x_1),\ldots, L_y(x_n)\big),$ { we obtain the desired conclusion}.
\end{proof}

\medskip
 The  Lagrangian dual  of the  SOCP relaxation problem  in  \eqref{eq:99} is 
\begin{eqnarray}\label{SOCP*}
& \displaystyle \inf_{\substack{y=(y_\alpha)\in \mathbb{R}^{s(d,n)}}} & L_y(f) \nonumber \\
& & L_y(g_i)\ge 0, i=1,\ldots, m,\nonumber \\
& & y_1=1,\nonumber \\
& & \|\bigg(\begin{array}{c}    2({\bf M}_l(y))_{ij}\\
                                                      ({\bf M}_l(y))_{ii}-({\bf M}_l(y))_{jj}
                                                                                  \end{array}\bigg)\| \le ({\bf M}_l(y))_{ii}+({\bf M}_l(y))_{jj}, 1 \le i,j \le s(l,n)\}.
\end{eqnarray}


The next theorem provides a way to retrieve  an optimal solution of the problem~\eqref{P} from its   SOCP relaxation problem.
\begin{theorem}\label{th:9-recover} {\bf (Retrieval of  solutions from exact SOCP relaxation)}
Let $f,-g_i$ be SOCP-convex polynomials with an even degree $d$ and let $l=\frac{d}{2}$.
For problem~\eqref{P}, let $ {\rm Argmin}\eqref{P}\neq\emptyset$ and let $\tilde x\in\mathbb{R}^n$ be such that \begin{align}\label{S-retrival} g_i(\tilde x)>0, i=1,\ldots,m.\end{align}Let $y^*:=(y^*_\alpha)\in \mathbb{R}^{s(d,n)}$ be an optimal solution of  problem~\eqref{SOCP*} and denote $x^*:=(L_{y^*}(x_1),\ldots, L_{y^*}(x_n))\in\mathbb{R}^n,$ where $x_i$ denotes the polynomial which maps a vector $x\in\mathbb{R}^n$ to its $i$th coordinate. Then, $x^*$ is an optimal solution of problem~\eqref{P}.
\end{theorem}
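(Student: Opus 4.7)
My plan is to verify two things about $x^*$: first that $x^*$ is feasible for \eqref{P}, and second that $f(x^*)=\mathrm{val}\eqref{P}$. Both parts will rely on the Jensen-type inequality for SOCP-convex polynomials established in Proposition~\ref{prop:Jensen}.

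For feasibility, I would apply Proposition~\ref{prop:Jensen} to the polynomial $-g_i$, which is SOCP-convex by hypothesis. Since $y^*$ is feasible for \eqref{SOCP*}, it satisfies $y^*_1=1$ together with the required SOCP inequalities on $\mathbf{M}_l(y^*)$, so the hypothesis of Proposition~\ref{prop:Jensen} holds and yields
\[
L_{y^*}(-g_i)\;\ge\;(-g_i)\bigl(L_{y^*}(x_1),\ldots,L_{y^*}(x_n)\bigr)\;=\;-g_i(x^*).
\]
Combined with $L_{y^*}(g_i)\ge 0$ coming from the dual constraints, this gives $g_i(x^*)\ge L_{y^*}(g_i)\ge 0$ for every $i$, so $x^*\in K$.

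For optimality, applying Proposition~\ref{prop:Jensen} to the SOCP-convex polynomial $f$ gives $L_{y^*}(f)\ge f(x^*)$, so it suffices to establish that $L_{y^*}(f)=\mathrm{val}\eqref{SOCP*}=\mathrm{val}\eqref{P}$. The inequality $\mathrm{val}\eqref{P}\le\mathrm{val}\eqref{SOCP*}$ follows from Theorem~\ref{th:9} together with weak conic duality: the strict feasibility condition \eqref{S-retrival} together with nonemptiness of $\mathrm{Argmin}\eqref{P}$ produces a KKT multiplier $\lambda^*\ge 0$, and since $f$ and each $-g_i$ are SOCP-convex (and nonnegative combinations preserve SOCP-convexity) so is $f-\sum_i\lambda_i^* g_i$; Theorem~\ref{th:9} then identifies $\mathrm{val}\eqref{P}$ with the optimal value of the primal SDSOS problem whose Lagrangian dual is \eqref{SOCP*}. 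For the reverse inequality I would pick any $\bar x\in\mathrm{Argmin}\eqref{P}$ and set $\bar y_\alpha:=\bar x^{i(\alpha)}$; then $\bar y_1=1$, $L_{\bar y}(g_i)=g_i(\bar x)\ge 0$, and $\mathbf{M}_l(\bar y)=x^{(l)}(\bar x)\,x^{(l)}(\bar x)^{T}\succeq 0$, so every $2\times 2$ principal submatrix of $\mathbf{M}_l(\bar y)$ is positive semidefinite, which is exactly the SOCP inequalities appearing in \eqref{SOCP*}. Hence $\bar y$ is dual feasible with $L_{\bar y}(f)=f(\bar x)=\mathrm{val}\eqref{P}$, giving $\mathrm{val}\eqref{SOCP*}\le\mathrm{val}\eqref{P}$.

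Combining these pieces, $f(x^*)\le L_{y^*}(f)=\mathrm{val}\eqref{P}$; since $x^*\in K$, we also have $f(x^*)\ge\mathrm{val}\eqref{P}$, and therefore $x^*$ solves \eqref{P}. The main technical point to get right is the SOCP-versus-PSD equivalence used when checking feasibility of $\bar y$: for $a,b,c\in\mathbb{R}$, the inequality $\|(2a,\,b-c)\|\le b+c$ is equivalent to $b,c\ge 0$ together with $a^{2}\le bc$, i.e.\ to positive semidefiniteness of $\bigl(\begin{smallmatrix}b&a\\ a&c\end{smallmatrix}\bigr)$. This lets the PSD property of the rank-one matrix $\mathbf{M}_l(\bar y)$ translate cleanly into the SOCP constraints of \eqref{SOCP*} and closes the weak-duality chain; everything else is a direct application of Proposition~\ref{prop:Jensen}.
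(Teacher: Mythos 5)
Your proposal is correct and follows essentially the same route as the paper: establish $\mathrm{val}\eqref{P}=\mathrm{val}\eqref{SOCP*}$ via Theorem~\ref{th:9} plus weak duality on one side and the moment vector $\bar y=\bar x^{(d)}$ of an optimal $\bar x$ on the other, then apply Proposition~\ref{prop:Jensen} to $-g_i$ and to $f$ to get feasibility and optimality of $x^*$. Your explicit remarks that nonnegative combinations preserve SOCP-convexity (so Theorem~\ref{th:9} applies) and that the SOCP constraint is exactly positive semidefiniteness of the $2\times 2$ principal submatrices are details the paper leaves implicit, but the argument is the same.
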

\begin{proof} Since the problem~\eqref{P} is convex and the Slater condition~\eqref{S-retrival} is valid, the KKT condition holds at an optimal solution of \eqref{P}. 
 Invoking Theorem~\ref{th:9}, we first see that  ${\rm val}\eqref{P}={\rm val}\eqref{SOCP-Recover}$, where $\eqref{SOCP-Recover}$ is  the following  SOCP relaxation problem \begin{align}\label{SOCP-Recover}\notag
 \max_{\substack{\mu\in \mathbb{R}, \lambda_i \ge 0,\\ M^{ij} \in S^{s(l,n)}}}\{\mu\mid & f_{\alpha}-\sum_{i=1}^m\lambda_i (g_i)_{\alpha}-\mu=\sum_{\substack{1 \le \beta,\gamma \le s(l,n) \\ i(\alpha)=i(\beta)+i(\gamma)}} \sum_{1 \le i,j \le s(l,n)} M^{ij}_{\beta \gamma}, \nonumber \\
&    M^{ij}_{\beta \gamma}=0,\, \forall \ (\beta,\gamma) \notin \{(i,i),(j,j),(i,j),(j,i)\}, \nonumber  \\
& \|\bigg(\begin{array}{c}    2 M^{ij}_{ij}\\
                                                        M^{ij}_{ii}-M^{ij}_{jj}
                                                                                  \end{array}\bigg)\| \le M^{ij}_{ii}+M^{ij}_{jj},  1 \le i,j \le s(l,n)\}. \tag{SOCP}
                                                                                  \end{align}
Since the problem~\eqref{SOCP*} is the Lagrangian dual problem of $\eqref{SOCP-Recover}$, it holds by weak duality that ${\rm val}\eqref{SOCP-Recover}\le {\rm val}\eqref{SOCP*}$ and hence, ${\rm val}\eqref{P}\le {\rm val}\eqref{SOCP*}$.  
{We next show that ${\rm val}\eqref{P}= {\rm val}\eqref{SOCP*}$. 
To do this, take} $\bar x:=(\bar x_1,\ldots,\bar x_n)\in  {\rm Argmin}\eqref{P}$ and denote $\bar y:=\bar x^{(d)}=(1, \bar x_1,\ldots,\bar x_n,  \bar x_1^2,\bar x_1\bar x_2,\ldots,\bar x_2^2,\ldots,\bar x_n^2,\ldots,\bar x_1^{d},\ldots,\bar x_n^d)$. Then, $\bar y_1:=1$ and $$ L_{\bar y}(g_i)=\sum_{\alpha=1}^{s(d,n)}(g_{i})_{\alpha} \bar y_\alpha=\sum_{\alpha=1}^{s(d,n)}(g_{i})_{\alpha} \bar x^{(d)}_\alpha=g_i(\bar x)\ge 0, i=1,\ldots, m.$$ Moreover,  from the definition of 
the moment matrix (see \eqref{M-2} and \eqref{M-1}), we have $${\bf M}_l(\bar y)=\sum_{\alpha=1}^{ s(d,n)} \bar y_\alpha M_\alpha= \sum_{\alpha=1}^{ s(d,n)} \bar x^{(l)}_\alpha M_\alpha =\bar x^{(l)}(\bar x^{(l)})^T\succeq 0,$$ which guarantees that $$\|\bigg(\begin{array}{c}    2({\bf M}_l(\bar y))_{ij}\\
                                                      ({\bf M}_l(\bar y))_{ii}-({\bf M}_l(\bar y))_{jj}
                                                                                  \end{array}\bigg)\| \le ({\bf M}_l(\bar y))_{ii}+({\bf M}_l(\bar y))_{jj}, 1 \le i,j \le s(l,n).$$ Hence, $\bar y$ is a feasible point of    problem~\eqref{SOCP*} and it in turn implies that $${\rm val}\eqref{SOCP*}\le L_{\bar y}(f)=\sum_{\alpha=1}^{s(d,n)}f_{\alpha} \bar y_\alpha=\sum_{\alpha=1}^{s(d,n)}f_{\alpha} \bar x^{(d)}_\alpha=f(\bar x)={\rm val}\eqref{P}.$$

So, we arrive at the conclusion that \begin{align}\label{2.3}{\rm val}\eqref{P}= {\rm val}\eqref{SOCP*}.\end{align}

Now,  let $y^*:=(y^*_\alpha)\in \mathbb{R}^{s(d,n)}$  be an optimal solution of  problem~\eqref{SOCP*}.  Then,  \begin{align}\label{}&{\rm val}\eqref{SOCP*}=L_{y^*}(f),\label{2.2} \\& y^*_1=1,\label{1.13-a}\\
& L_{y^*}(g_i)\ge 0, i=1,\ldots, m,\label{2.1} \\& \|\bigg(\begin{array}{c}    2({\bf M}_l(y^*))_{ij}\\
                                                      ({\bf M}_l(y^*))_{ii}-({\bf M}_l(y^*))_{jj}
                                                                                  \end{array}\bigg)\| \le ({\bf M}_l(y^*))_{ii}+({\bf M}_l(y^*))_{jj}, \, 1 \le i,j \le s(l,n).\label{1.14-a}
\end{align}
Under the validation of \eqref{1.13-a} and \eqref{1.14-a}, applying Jensen's inequality for SOCP-convex polynomials (see Proposition~\ref{prop:Jensen}), we obtain from \eqref{2.1} that $$0\ge L_{y^*}(-g_i)\ge -g_i(L_{y^*}(x_1),\ldots, L_{y^*}(x_n))=-g_i(x^*), i=1,\ldots, m,$$ which shows that $x^*$ is a feasible point of problem~\eqref{P} and thus, ${\rm val}\eqref{P}\le f(x^*).$  Similarly, we derive from \eqref{2.3} and  \eqref{2.2} and the Jensen's inequality that $${\rm val}\eqref{P}= L_{y^*}(f)\ge f(L_{y^*}(x_1),\ldots, L_{y^*}(x_n))=f(x^*).$$ So,  ${\rm val}\eqref{P}= f(x^*)$ and  $x^*$ is an optimal solution of problem~\eqref{P}. The proof is complete.
\end{proof}

We now provide a simple example to illustrate the preceding theorem on recovering an optimal solution of an SOCP-convex polynomial optimization problem from its SOCP relaxation.
\begin{example}
Consider the following SOCP-convex polynomial optimization problem
\begin{eqnarray*}
(EP_1) &\min & x_1^4-x_2 \\
& \mbox{ s.t. } & 1-x_1^4-x_2^4 \ge 0.
\end{eqnarray*}
Direct verification shows that $(0,{1})$ is an optimal solution of $(EP_1)$ with optimal value $-1$.

The SOCP relaxation problem of $(EP_1)$ is
\[
 \max_{\substack{\mu\in \mathbb{R}, \lambda \ge 0, \\
\sigma\in {\bf SDSOS}_{2,2}}}\{\mu \mid x_1^4-x_2- \lambda (1-x_1^4-x_2^4)-\mu =\sigma\}
\]
and the Lagrangian dual of the SOCP relaxation can be formulated as
\begin{eqnarray*}
(SOCP_{EP1}^*) & \inf_{y \in \mathbb{R}^{15}}& y_{11}-y_3,  \\
& & 1-y_{11}-y_{15} \ge 0, \\
& & y_1=1, \\
& & {\bf M}_l(y)=\left(\begin{array}{cccccc}
y_1 & y_2 & y_3 & y_4 & y_5 & y_6 \\
y_2 & y_4 & y_5 & y_7 & y_8 & y_9 \\
y_3 & y_5 & y_6 & y_8 & y_9 & y_{10} \\
y_4 & y_7 & y_8 & y_{11} & y_{12} & y_{13} \\
y_5 & y_8 & y_9 & y_{12} & y_{13} & y_{14} \\
y_6 & y_9 & y_{10} & y_{13} & y_{14} & y_{15}
                        \end{array}
 \right), \\
&& \|\bigg(\begin{array}{c}    2({\bf M}_l(y))_{ij}\\
                                                      ({\bf M}_l(y))_{ii}-({\bf M}_l(y))_{jj}
                                                                                  \end{array}\bigg)\| \le ({\bf M}_l(y))_{ii}+({\bf M}_l(y))_{jj}, \, 1 \le i,j \le 6.
\end{eqnarray*}
Note that
\begin{eqnarray*}
& & \|\bigg(\begin{array}{c}  2({\bf M}_l(y))_{ij}\\
                                                      ({\bf M}_l(y))_{ii}-({\bf M}_l(y))_{jj}
                                                                                  \end{array}\bigg)\| \le ({\bf M}_l(y))_{ii}+({\bf M}_l(y))_{jj} \\  & \Leftrightarrow & ({\bf M}_l(y))_{ii}\ge 0,  ({\bf M}_l(y))_{jj}\ge 0 {\mbox{ and }} \sqrt{({\bf M}_l(y))_{ii} ({\bf M}_l(y))_{jj}}\ge  ({\bf M}_l(y))_{ij}.
\end{eqnarray*}
Note that $(SOCP_{EP1}^*)$ is also a second-order cone program. Solving $(SOCP_{EP1}^*)$ via the conic programming solver MOSEK gives an optimal solution
${\bf y}^*=(1,0,1,0,0,1,0,0,0,1,0,0,0,0,1) \in \mathbb{R}^{15}$. The optimality of ${\bf y^*}$ can also be verified independently as follows.
For any feasible point of $(SOCP_{EP1}^*)$, one has $y_1=1$, $y_{11} \ge 0$, $y_{15} \ge 0$, $1-y_{11}-y_{15} \ge 0$ and
\[
y_6=y_1y_6 \ge y_3^2 \mbox{ and } y_{15}=y_1y_{15} \ge y_6^2.
\]
This implies that $0 \le y_{15} \le 1$ and $y_{15} \ge y_6^2 \ge y_3^4$ (and so, $y_3 \le 1)$. So, $y_{11}-y_3 \ge 0-1=-1$, and hence ${\rm val}(SOCP_{EP1}^*) \ge -1$. On the other hand,  it is easy to see that ${ y}^*=(1,0,1,0,0,1,0,0,0,1,0,0,0,0,1) \in \mathbb{R}^{15}$ is feasible for $(SOCP_{EP1}^*)$ with objective value $-1$. So, ${\bf y}^*$ is a global solution of $(SOCP_{EP1}^*)$. Now, applying the preceding theorem, gives us that the optimal solution is $x^*=(L_{{ y}^*}(x_1),L_{{y}^*}(x_2))=(0,1)$, which agrees with the true optimal solution.
\end{example}

\section{Applications to Conic-Convex Semi-Algebraic Programs}

In this section, we propose a conic linear  programming hierarchy and establish its convergence and present conditions for finite convergence at one-step of the hierarchy for  the conic convex semi-algebraic problem:
 \begin{align}\label{CP}
  \inf_{x\in \R^{n}}{\{f(x) \mid  x\in K, \; G(x)\in S\}},\tag{CP}
\end{align}
where $f:\R^n\to\R$ is a convex polynomial, $S\subset\mathbb{R}^p$ is a closed convex cone, $K\subset\R^n$ is the basic semi-algebraic set given by \begin{align}\label{O-set}K:=\{x\in\R^n\mid  g_i(x)\ge 0,\; i=1,\ldots, m\}\end{align} for some (not necessarily convex) polynomials $g_i:\R^n\to \R, i=1,\ldots, m,$   and  $G:\R^n\to \mathbb{R}^p$ is an $S$-concave  polynomial in the sense that $$\alpha G(x)+(1-\alpha)G(y)-G(\alpha x+(1-\alpha)y)\in -S\;\mbox{ for all }\; x,y\in\R^n, \alpha\in [0,1].$$


 Let $n_1, n_2\in\mathbb{N}$, $0\le n_1, n_2\le n, n=n_1+n_2$ and fix a positive even number ${r} \in \mathbb{N}$.
 We now define a hierarchy of conic programming relaxations  for the polynomial program~\eqref{CP}  as follows: for $k \in \mathbb{N}$,
\begin{align}\label{CRP} \sup_{\substack{\mu \in \mathbb{R}, c_{\bf p,q} \ge 0, \\  \lambda\in S^*,
\sigma_1\in \Sigma^2_{n_1,r}, \sigma_2\in {\bf SDSOS}_{n_2,r}}}\left\{\mu\mid f-{\langle \lambda, G\rangle}-\sum_{{\bf p,q} \in {(\mathbb{N}_0)^m}, |{\bf p}|+|{\bf q}| \le k}c_{{\bf p, q}}\prod_{i=1}^{m} \widehat{g}_i^{p_i}(1-\widehat{g}_i)^{q_i}-\mu =\sigma_1+\sigma_2\right\}, \tag{\rm CRP$_{k}^{r}$}\end{align} where $S^*:=\{v\in \mathbb{R}^p\mid v^Ts\ge 0 \mbox{ for all } s\in S\}$ is the dual cone of $S$.

In the following theorem we present a convergence of the above hierarchy using  a hyperplane separation theorem and the main theorem of  Section 3.
\begin{theorem}{\bf (Convergent conic linear  programming hierarchy for \eqref{CP})} \label{thm:1-CP-cone}
For problem~\eqref{CP}, let $M$ be a positive number such that {{$\displaystyle M > \max_{1 \le i\le m}\sup_{x \in K}\{g_i(x)\}$}} and denote $\widehat{g}_i(x)=\frac{g_i(x)}{M}$.   Let $n_1, n_2\in\mathbb{N}$,  $0\le n_1, n_2\le n, n=n_1+n_2$ and fix a positive even number ${r} \in \mathbb{N}$.
Suppose that Assumption A holds and that $K$ is a compact convex set. Assume further  that there exists $\tilde x\in K$ such that \begin{align}\label{Slater}G(\tilde x)\in {\rm int}S.\end{align}Then, ${\rm val}\eqref{CRP} \le {\rm val} ({\rm CRP}_{k+1}^r) \le {\rm val}\eqref{CP}$ for all $k\in\mathbb{N}$, and
\[
\lim_{k \rightarrow \infty} {\rm val}\eqref{CRP} ={\rm val}\eqref{CP}.
\]
\end{theorem}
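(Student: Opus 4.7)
My overall plan is to split the proof into two parts. The ``routine'' part, namely monotonicity ${\rm val}({\rm CRP}_k^r)\le {\rm val}({\rm CRP}_{k+1}^r)$ and the upper bound ${\rm val}({\rm CRP}_k^r)\le {\rm val}\eqref{CP}$, I would handle directly from the definition of $({\rm CRP}_k^r)$: monotonicity is immediate by padding any step-$k$ feasible tuple with $c_{\bf p,q}=0$ for the new indices $|\mathbf{p}|+|\mathbf{q}|=k+1$, while for the upper bound I would evaluate the defining identity at any $x$ feasible for \eqref{CP} and use that $0\le\widehat{g}_i(x)\le 1$ by the choice of $M$, that $\la\lambda,G(x)\ra\ge0$ since $\lambda\in S^*$ and $G(x)\in S$, and that $\sigma_1(x)+\sigma_2(x)\ge 0$ as an SDP-SDSOS polynomial, to deduce $f(x)\ge \mu$ and therefore ${\rm val}\eqref{CP}\ge \mu$.

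The crux is the asymptotic equality $\lim_k {\rm val}({\rm CRP}_k^r)={\rm val}\eqref{CP}$. My plan is to reduce it to Theorem~\ref{thm:1} by absorbing the conic constraint $G(x)\in S$ into a Lagrangian. Since $G$ is $S$-concave, the scalar polynomial $x\mapsto\la\lambda,G(x)\ra$ is concave for every $\lambda\in S^*$, and so $h_\lambda:=f-\la\lambda,G\ra$ is a convex polynomial on $\R^n$. Using the Slater condition~\eqref{Slater} together with the convexity of $f$, the convexity and compactness of $K$, and the $S$-concavity of $G$, I would invoke strong Lagrangian duality for convex conic programs---this is where the hyperplane separation theorem of \cite{Mor-Nam-14} enters, applied to separate the (convex) perturbation set $\{(f(x)+\alpha,G(x)-s):x\in K,\,\alpha\ge 0,\,s\in S\}$ from the point $({\rm val}\eqref{CP},0)$---to produce $\lambda^*\in S^*$ with
\[
\inf_{x\in K}\{f(x)-\la\lambda^*,G(x)\ra\}={\rm val}\eqref{CP}.
\]
With $\lambda^*$ fixed, $\min_{x\in K} h_{\lambda^*}(x)$ is exactly an instance of~\eqref{P} on the same basic semi-algebraic set $K$, so Assumption A is inherited, and Theorem~\ref{thm:1}(i) applied to it produces a sequence of bounded-degree SDP-SOCP relaxations whose values converge up to ${\rm val}\eqref{CP}$. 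Since freezing $\lambda=\lambda^*$ inside $({\rm CRP}_k^r)$ recovers precisely these relaxations, ${\rm val}({\rm CRP}_k^r)$ is bounded below by their values, and combined with the already-established upper bound this gives the desired limit.

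The only real obstacle I foresee is the strong duality step, as it rests on Slater plus closedness of the primal value function; if attainment of $\lambda^*$ is uncomfortable, a safe workaround is to pick, for each $\varepsilon>0$, some $\lambda^\varepsilon\in S^*$ with $\inf_{x\in K} h_{\lambda^\varepsilon}\ge {\rm val}\eqref{CP}-\varepsilon$, apply Theorem~\ref{thm:1} to $h_{\lambda^\varepsilon}$ to conclude $\liminf_k {\rm val}({\rm CRP}_k^r)\ge {\rm val}\eqref{CP}-\varepsilon$, and let $\varepsilon\downarrow 0$ at the end. All remaining steps are bookkeeping, essentially matching the variables of $({\rm CRP}_k^r)$ after freezing $\lambda=\lambda^*$ with those of the polynomial relaxation \eqref{SDP-SOCP} used in Theorem~\ref{thm:1}, and verifying that the weak-duality chain ${\rm val}({\rm CRP}_k^r)\le{\rm val}({\rm CRP}_{k+1}^r)\le{\rm val}\eqref{CP}$ forces the squeeze.
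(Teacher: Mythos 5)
Your proposal is correct and follows essentially the same route as the paper: hyperplane separation under the Slater condition \eqref{Slater} produces $\lambda^*\in S^*$ such that $\bar x$ minimizes the convex polynomial $f-\la\lambda^*,G\ra$ over $K$, and Theorem~\ref{thm:1} applied to that auxiliary instance of \eqref{P} (your (AP)), combined with the observation that freezing $\lambda=\lambda^*$ in $({\rm CRP}_k^r)$ recovers its relaxations, yields the squeeze. Your direct verification of the upper bound ${\rm val}({\rm CRP}_k^r)\le{\rm val}\eqref{CP}$ via $\la\lambda,G(x)\ra\ge 0$ at feasible points is in fact slightly more explicit than the paper's terse appeal to Theorem~\ref{thm:1}, but it is the same argument in substance.
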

\begin{proof}
Let $\bar x\in {\rm Argmin}\eqref{CP}$ and define  \begin{align*}\Omega:= \{(r,y)\in\R\times \mathbb{R}^{p}\mid&\exists x\in K,\; f(x)-f(\bar x)< r,\; y+G(x)\in S\}.\end{align*} Then, $\Omega\neq\emptyset$ due to $(f(\tilde x)-f(\bar x)+\epsilon,0)\in \Omega$ for each $\epsilon>0$. It is easy to check that $\Omega$ is a convex set and, as  $\bar x$ is an optimal solution of~\eqref{CP}, it follows that $(0,0)\notin \Omega$. Using  a separation theorem (see, e.g., \cite[Theorem~2.5]{Mor-Nam-14}), we find
  $ (\lambda_0,\lambda)\in(\R\times \mathbb{R}^p)\setminus\{0\}$ such that
\begin{align}\label{1.4}\inf{\Big\{\lambda_0 r+\langle \lambda, y\rangle \mid (r,y)\in \Omega\Big\}}\ge 0.\end{align} This ensures that $\lambda_0 \ge 0, \lambda\in S^*$.

 Let $\epsilon>0.$ Then, due to $\big(f(x)-f(\bar x)+\epsilon,-G(x)\big)\in \Omega$ for each $x\in K$, we derive from \eqref{1.4} that
\begin{align}\label{2.3-L-2-Ex}\lambda_0\big(f(x)-f(\bar x)+\epsilon\big)-{\langle \lambda, G\rangle}(x)\ge 0\;\mbox{ for all }\; x\in K.\end{align} Now, it follows from \eqref{Slater} and \eqref{2.3-L-2-Ex} that  $\lambda_0\neq 0$ and therefore, there is no loss of generality in assuming  that $\lambda_0=1.$ So, we obtain that $f(x)-f(\bar x)+\epsilon-{\langle \lambda, G\rangle}(x)\ge 0$ for all $x\in K.$ Since $\epsilon>0$ was arbitrarily chosen, we arrive at the conclusion that $ f(x)-f(\bar x)-{\langle \lambda, G\rangle}(x) \ge 0$ for all $x\in K.$

Let the function $h:\R^n\to \R$ be given by $h(x):=f(x)-{\langle \lambda, G\rangle}(x)$ for  $x\in\R^n.$
We see that $h$ is a convex polynomial and $h(x)\ge f(\bar x)$ for all $x\in K.$ It entails  especially  that ${\langle \lambda, G\rangle}(\bar x)=f(\bar x)-h(\bar x)\le 0.$ In addition, ${\langle \lambda, G\rangle}(\bar x)\ge 0$ as $\bar x$ is a feasible point of problem~\eqref{P}. So, $h(\bar x)=f(\bar x)\le h(x)$ for all $x\in K.$ In other words, $\bar x$ is an optimal solution of the following convex program \begin{align}\label{AP}\inf\limits_{x\in\R^n}{\{h(x)\mid  g_i(x)\ge 0,\; i=1,\ldots, m\}}.\tag{AP}\end{align}
Using Assumption A, we obtain from Theorem~\ref{thm:1}  that  ${\rm val}\eqref{CRP} \le {\rm val} ({\rm CRP}_{k+1}^r) \le {\rm val}\eqref{AP}$ for all $k\in\mathbb{N}$, and
\[
\lim_{k \rightarrow \infty} {\rm val}\eqref{CRP} ={\rm val}\eqref{AP}.
\]  Since ${\rm val}\eqref{AP}=f(\bar x)={\rm val}\eqref{CP},$ the proof of the theorem is complete. \end{proof}

\medskip


Note that for $\eqref{CP}$ if we let $S:=  S^p_+$ and assume that $G$ is an $S^p_+$-concave matrix polynomial then under the assumption of  the preceding theorem, we obtain that
\[
\lim_{k \rightarrow \infty} {\rm val}\eqref{SDR} ={\rm val}\eqref{CP},
\]
where \eqref{SDR} is the following SDP relaxation
\begin{align}\label{SDR} \sup_{\substack{\mu \in \mathbb{R}, c_{\bf p,q} \ge 0, \\  \lambda\in S^p_+,
\sigma_1\in \Sigma^2_{n_1,r}, \sigma_2\in {\bf SDSOS}_{n_2,r}}}\left\{\mu\mid f-{\rm Tr}(\lambda G)-\sum_{{\bf p,q} \in {(\mathbb{N}_0)^m}, |{\bf p}|+|{\bf q}| \le k}c_{{\bf p, q}}\prod_{i=1}^{m} \widehat{g}_i^{p_i}(1-\widehat{g}_i)^{q_i}-\mu =\sigma_1+\sigma_2\right\} \tag{\rm SDR$_{k}^{r}$}\end{align}
with  $n_1, n_2\in\mathbb{N}$, $0\le n_1, n_2\le n, n=n_1+n_2$ and ${r} \in \mathbb{N}$  a positive even number.

\medskip
In the case where the semi-algebraic set $K$ given in \eqref{O-set} is described by convex polynomial inequalities we show that the values of the relaxation problems converge to the common value of \eqref{CP} and its Lagrangian dual under a constraint qualification.

\begin{corollary}{\bf (Convergence to common primal-dual value)}
For problem~\eqref{CP}, let $M$ be a positive number such that {{$\displaystyle M > \max_{1 \le i\le m}\sup_{x \in K}\{g_i(x)\}$}} and denote $\widehat{g}_i(x)=\frac{g_i(x)}{M}$.   Let $n_1, n_2\in\mathbb{N}$, $0\le n_1, n_2\le n, n=n_1+n_2$ and fix a positive even number ${r} \in \mathbb{N}$.
Suppose that Assumption A holds and that $-g_i, i=1,\ldots, m,$ are convex polynomials. Assume further  that there exists $\tilde x\in \mathbb{R}^n$ such that \begin{align}&g_i(\tilde x)>0, \, i=1,\ldots, m,\label{sla-add}\\& G(\tilde x)\in {\rm int}S.\label{Slater-SOP}\end{align}Then, ${\rm val}\eqref{CRP} \le {\rm val} ({\rm CRP}_{k+1}^r) \le {\rm val}\eqref{CP}$ for all $k\in\mathbb{N}$, and
\[
\lim_{k \rightarrow \infty} {\rm val}\eqref{CRP} ={\rm val}\eqref{CP}=\max_{\substack{\lambda\in S^*,  \lambda_i \ge 0}}\inf\limits_{x\in\R^n}{\{f(x)-{\langle \lambda, G\rangle}(x)-\sum_{i=1}^m\lambda_i g_i(x)\}}.
\]
\end{corollary}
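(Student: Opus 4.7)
The argument naturally splits into two parts: the convergence of the hierarchy values to $\mathrm{val}\eqref{CP}$, and the identification of $\mathrm{val}\eqref{CP}$ with the Lagrangian dual value.

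\emph{Part 1 (Convergence).} I would first verify that the hypotheses of Theorem~\ref{thm:1-CP-cone} are fulfilled. Assumption~A guarantees compactness of $K$. The additional convexity assumption that $-g_i$ are convex polynomials makes $K$ a finite intersection of sublevel sets of convex functions, hence convex. The interior-point condition $G(\tilde x)\in\mathrm{int}\,S$ in \eqref{Slater-SOP} is precisely what Theorem~\ref{thm:1-CP-cone} requires. Applying that theorem directly yields
$$\mathrm{val}\eqref{CRP}\le\mathrm{val}(\mathrm{CRP}_{k+1}^r)\le\mathrm{val}\eqref{CP}\quad\text{for all }k\in\mathbb{N},\qquad \lim_{k\to\infty}\mathrm{val}\eqref{CRP}=\mathrm{val}\eqref{CP}.$$

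\emph{Part 2 (Strong Lagrangian duality).} Denote the dual value by $D:=\sup_{\lambda\in S^*,\,\mu\in\mathbb{R}^m_+}\inf_{x\in\mathbb{R}^n}\{f(x)-\langle\lambda,G(x)\rangle-\sum_{i=1}^m\mu_i g_i(x)\}$. Weak duality immediately gives $D\le\mathrm{val}\eqref{CP}$. For the reverse inequality I would extend the hyperplane separation argument used in the proof of Theorem~\ref{thm:1-CP-cone} so that the semi-algebraic constraints $g_i\ge 0$ and the cone constraint $G\in S$ are handled simultaneously. Pick any $\bar x\in\mathrm{Argmin}\eqref{CP}$ and set
$$\Omega:=\{(r,y,z)\in\mathbb{R}\times\mathbb{R}^p\times\mathbb{R}^m\mid\exists\,x\in\mathbb{R}^n,\ f(x)-f(\bar x)<r,\ y+G(x)\in S,\ z_i+g_i(x)\ge 0\ \forall i\}.$$
Convexity of $f$, concavity of each $g_i$, $S$-concavity of $G$, and convexity of $S$ imply $\Omega$ is convex; $(f(\tilde x)-f(\bar x)+\varepsilon,0,0)\in\Omega$ for any $\varepsilon>0$, so $\Omega\ne\emptyset$; and optimality of $\bar x$ for \eqref{CP} yields $(0,0,0)\notin\Omega$. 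A separation theorem (as in \cite[Theorem~2.5]{Mor-Nam-14}) then delivers a nonzero triple $(\lambda_0,\lambda,\mu)$ with $\lambda_0\ge 0$, $\lambda\in S^*$, $\mu\in\mathbb{R}^m_+$ and
$$\lambda_0 r+\langle\lambda,y\rangle+\sum_{i=1}^m\mu_i z_i\ge 0\qquad\text{for all }(r,y,z)\in\Omega.$$

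\emph{Main obstacle.} The crux of the argument is ruling out $\lambda_0=0$; this is exactly where both Slater conditions \eqref{sla-add} and \eqref{Slater-SOP} are needed in tandem. Substituting the feasible point $\tilde x$ into the separating inequality (with $r=f(\tilde x)-f(\bar x)+\varepsilon$, $y=-G(\tilde x)$, $z_i=-g_i(\tilde x)$), if $\lambda_0=0$ we would obtain $\langle\lambda,G(\tilde x)\rangle+\sum_i\mu_i g_i(\tilde x)\le 0$; but $G(\tilde x)\in\mathrm{int}\,S$ forces $\langle\lambda,G(\tilde x)\rangle>0$ unless $\lambda=0$, while $g_i(\tilde x)>0$ forces $\sum_i\mu_i g_i(\tilde x)>0$ unless $\mu=0$, contradicting $(\lambda_0,\lambda,\mu)\ne 0$. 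Hence $\lambda_0>0$ and we may normalize $\lambda_0=1$, which gives
$$f(x)-\langle\lambda,G(x)\rangle-\sum_{i=1}^m\mu_i g_i(x)\ge f(\bar x)\qquad\text{for all }x\in\mathbb{R}^n,$$
after letting $\varepsilon\downarrow 0$. Taking the infimum in $x$ shows $D\ge f(\bar x)=\mathrm{val}\eqref{CP}$, and attainment of the supremum at $(\lambda,\mu)$ justifies writing $\max$ in place of $\sup$. Combining Parts~1 and~2 completes the proof.
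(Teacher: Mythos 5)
Your proposal is correct, and for the strong-duality half it takes a genuinely different route from the paper. Part~1 coincides with the paper's argument: verify that $K$ is compact (Assumption~A) and convex (concavity of the $g_i$), note that $\tilde x\in K$, and invoke Theorem~\ref{thm:1-CP-cone}. For the identity $\mathrm{val}\eqref{CP}=\max_{\lambda\in S^*,\lambda_i\ge 0}\inf_x\{f-\langle\lambda,G\rangle-\sum_i\lambda_i g_i\}$, however, the paper dualizes in two stages: it first repeats the separation argument from the proof of Theorem~\ref{thm:1-CP-cone} (separating only the conic constraint $G(x)\in S$, with the existential quantifier ranging over $x\in K$) to produce a single multiplier $\lambda\in S^*$ for which $\bar x$ solves the auxiliary convex program $\inf\{f(x)-\langle\lambda,G\rangle(x)\mid x\in K\}$, and then it invokes a classical Lagrangian duality theorem for convex programs (\cite[Theorem~3.1]{jeya-08}), valid under the Slater condition \eqref{sla-add}, to extract the remaining multipliers $\lambda_i\ge 0$. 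You instead enlarge the separation set to $\Omega\subset\mathbb{R}\times\mathbb{R}^p\times\mathbb{R}^m$ and obtain all multipliers $(\lambda,\mu)$ in a single application of the separation theorem, killing $\lambda_0=0$ by plugging the common Slater point $\tilde x$ into the separating inequality and using both \eqref{sla-add} and \eqref{Slater-SOP} at once. Your variant is more self-contained (no external duality theorem is needed) at the cost of verifying convexity of the larger set $\Omega$, which is precisely where the concavity of the $g_i$ enters; the paper hides that hypothesis inside the cited duality result instead. Both arguments yield attainment of the dual supremum at the constructed multipliers, which justifies writing $\max$.
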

\begin{proof} Let $\bar x\in {\rm Argmin}\eqref{CP}$.  Arguing similarly as in the proof of Theorem~\ref{thm:1-CP-cone}, under \eqref{Slater-SOP},  we find $\lambda\in S^*$ such that $\bar x$ is an optimal solution of
 the following convex program \begin{align}\label{AOP}\inf\limits_{x\in\R^n}{\{f(x)-{\langle \lambda, G\rangle}(x)\mid  g_i(x)\ge 0,\; i=1,\ldots, m\}}.\tag{AOP}\end{align} This shows in particular that ${\rm val}\eqref{CP}=f(\bar x)={\rm val}\eqref{AOP}.$
 Moreover, by \eqref{sla-add},  employing  the Lagrangian duality for convex programs (see e.g., \cite[Theorem~3.1]{jeya-08}) applied to the problem~\eqref{AOP}, we find $\lambda_i\ge 0, i=1,\ldots, m$ such that $\inf\limits_{x\in\R^n}{\{f(x)-{\langle \lambda, G\rangle}(x)-\sum_{i=1}^m\lambda_i g_i(x)\}}={\rm val}\eqref{AOP}.$ Hence, $${\rm val}\eqref{AOP}=\max_{\substack{\lambda\in S^*,  \lambda_i \ge 0}}\inf\limits_{x\in\R^n}{\{f(x)-{\langle \lambda, G\rangle}(x)-\sum_{i=1}^m\lambda_i g_i(x)\}}.$$ Now, the proof is completed by invoking Theorem~\ref{thm:1-CP-cone}.\end{proof}

\vspace{-0.6cm}

\end{document}